\newtheorem{thm}{Theorem}
\newtheorem{lem}{Lemma}
\newtheorem{rem}{Remark}
\newtheorem{pro}{Proposition}
\newtheorem{coro}{Corollary}
\newtheorem{defn}{Definition}
\newtheorem{exam}{Example}
\newtheorem{assu}{Assumption}
\newcommand{\rank}{\mathrm{Rank}}
\newcommand{\sprank}{\mathrm{Sprank}}
\begin{document}

	\title{A Necessary Condition for Network Identifiability with Partial Excitation and Measurement}
	
	\author{Xiaodong~Cheng, 
			Shengling Shi,
			 Ioannis Lestas, and
			 Paul M.J. Van den Hof 
			\thanks{This work is supported by 
				the European Research Council (ERC), Advanced Research Grant SYSDYNET (No. 694504) and Starting Grant HetScaleNet (No. 679774).}%
		   \thanks{
				X. Cheng and I. Lestas are with the Control Group,  Department
				of Engineering, University of Cambridge, CB2 1PZ, United Kingdom.
				{\tt\small \{xc336, icl20\}@cam.ac.uk}}
			\thanks{
				S. Shi and P. M. J. Van den Hof are with the Control Systems Group, Department of Electrical Engineering,
				Eindhoven University of Technology, 
				5600 MB Eindhoven,
				The Netherlands.
				{\tt\small \{s.shi, P.M.J.vandenhof\}@tue.nl}}%

	}
	
	\maketitle

\begin{abstract}
	This paper considers dynamic networks where vertices and edges represent manifest signals and causal dependencies among the signals, respectively. We address the problem of how to determine if the dynamics of a network can be identified when only partial vertices are measured and excited. 
	A necessary condition for network identifiability is presented, where the analysis is performed based on identifying the dependency of a set of rational functions from excited vertices to measured ones. This condition is further characterised by using an edge-removal procedure on the associated bipartite graph. Moreover, on the basis of necessity analysis, we provide a necessary and sufficient condition for identifiability in circular networks. 
\end{abstract}

\section{Introduction} 

The study of complex dynamic networks has flourished recently in the systems and control community, along with a wide range of applications in e.g., robotic coordination, biochemical reactions, and smart power grids. Developing mathematical models, i.e. system identification, for these interconnected systems is a fundamental step in understanding their behaviour and eventually devising efficient techniques for prediction and control. 

In this paper, we consider identification of a class of dynamic networks consisting of vertex signals that are interconnected by causal rational transfer functions (modules) and possibly driven by external excitation signals \cite{paul2013netid}.  A central concept here is \textit{identifiability}, which essentially reflects if a unique model can be distinguished on the basis of measurement data. Different from identifiability defined in the classical system identification literature for fixed open-loop and closed-loop configurations \cite{ljung1999system}, identifiability analysis in a network setting largely relies on the interconnection structure of networks \cite{gonccalves2008LTInetworks,chetty2017necessary,adebayo2012dynamical,weerts2015sysid,hayden2016sparse,bazanella2017identifiability,hendrickx2018identifiability,henk2018identifiability}. Based on the structural information, a set of models for a dynamic network is obtained, and then the core problem of this paper is to \textit{explore the conditions under which the network model set is identifiable}.  

The identifiability problem can be formulated in the scale of a full network, see e.g., \cite{bazanella2017identifiability,hendrickx2018identifiability,Bazanella2019CDC,henk2018identifiability,weerts2018identifiability,cheng2019allocation}, and it can also focus on a single module or a subset of modules in a network \cite{gevers2018practical,weerts2018single,shi2020subnetworks,shi2020single}.  
In this paper, the latter is of particular interest. The majority of existing studies on the full network identifiability mainly consider two settings. In e.g., \cite{bazanella2017identifiability,hendrickx2018identifiability,henk2018identifiability}, all vertices are assumed to be excited by external excitation signals, and only partial vertex signals are measured, while the works e.g., \cite{weerts2018identifiability,cheng2019CDC,cheng2019allocation} follow a dual setting, where all internal variables are measured, and only a subset vertices are driven by measured external excitation signals or unmeasured noises. Within these two settings, 
necessary and sufficient conditions for network identifiability have been derived. 
In \cite{weerts2018identifiability}, identifiability is interpreted as the full rank property of certain transfer matrix from external signals to measured internal signals. In contrast, a generic notion of identifiability is proposed in \cite{bazanella2017identifiability,hendrickx2018identifiability}, which leads to attractive graph-theoretical conditions based on \textit{vertex-disjoint paths} for checking network identifiability. These works have inspired \cite{henk2018identifiability}, which extends the path-based condition to address global identifiability. Furthermore, \cite{cheng2019allocation} reformulates a new graphical characterisation for generic identifiability by means of \textit{disjoint pseudotree covering}, which further leads to a scalable graph-based algorithm for allocating actuators.

While the above-mentioned works require that all vertices are either measured or excited by sufficiently rich external signals, \cite{Bazanella2019CDC} provides identifiability results in the scenario where not all vertices are measured and not all vertices are excited. Sufficient conditions are discussed for the generic identifiability of a subset of modules. Yet these conditions require certain prior knowledge on network dynamics, and they are not entirely graph-based except for special networks with cycle and tree topologies. A generic local notion of network identifiability is considered in \cite{legat2020local} that is weaker than generic identifiability. A necessary and sufficient condition for local identifiability can be obtained in terms of transfer matrix ranks, but a graph-theoretical analysis for local identifiability remains an open question. In contrast, we present in \cite{shi2020single} a sufficient condition for generic identifiability  of a single module in a network. 

In line with the network setting of \cite{Bazanella2019CDC,shi2020single}, this paper studies identifiability of full dynamic networks, where only partial excitation and measurement signals are available. 
We develop a new necessary condition for the identifiability of general networks by recasting the identifiability problem into determining the dependency of a system of rational functions with the parametrised modules as indeterminate variables, where the functions represent individual mappings from the excitation signals to the measured vertices. The condition can be reformulated in terms of an edge-removal process for a bipartite graph, thus it only relies on the topology of networks. Furthermore, the necessity analysis is then applied to circular  networks, leading to a necessary and sufficient condition for identifiability that goes beyond the results in \cite{Bazanella2019CDC}, where only a sufficient condition is given.

	The rest of this paper is organized as follows: In Section~\ref{sec:preliminaries}, we recap some basic notations used in graph theory and introduce the dynamic network model. Section \ref{sec:result} presents a necessary condition for the identifiability of dynamic networks and further zooms into the analysis of circular  networks, for which a necessary and sufficient condition for identifiability is provided. Finally, conclusions are made in Section \ref{sec:conclusion}.

\textit{Notation:}
Denote $\mathbb{R}$ as the set of real numbers, and $\mathbb{R}(q)$ is the rational function field over $\mathbb{R}$ with the variable $q$. The cardinality of a set $\mathcal{V}$ is represented by $\lvert \mathcal{V} \rvert$. $A_{ij}$ denotes the $(i,j)$-th entry of a matrix $A$, and more generally, $A_{\mathcal{U},\mathcal{V}}$ denotes the submatrix of $A$ that consists of the rows and columns of $A$ indexed by two positive integer sets $\mathcal{U}$ and $\mathcal{V}$, respectively. The normal rank of a transfer matrix $A(q)$ is denoted by $\rank(A(q))$, and $\rank(A(q)) = r$ if the rank of $A(q)$ is equal to $r$ for almost all values of $q$.

	\section{Preliminaries and Problem Setting}
	\label{sec:preliminaries}
	\subsection{Graph Theory}
	
	A graph $\mathcal{G}$ consists of a finite and nonempty vertex set  $\mathcal{V}: = \{1, 2, \cdots , L\}$ and an edge set $\mathcal{E} \subseteq \mathcal{V} \times \mathcal{V}$.
	A directed graph is such that each element in $\mathcal{E}$ is an ordered pair of elements of $\mathcal{V}$. If $(i,j) \in \mathcal{E}$, we say that the vertex $i$ is an \textit{in-neighbour} of $j$, and $j$ is an \textit{out-neighbour} of $i$. We use $\mathcal{N}_j^-$ and $\mathcal{N}_j^+$ to denote the sets that collect all the in-neighbours and out-neighbours of vertex $j$, respectively.
	
	A graph $\mathcal{G}$ is called \textit{simple}, if $\mathcal{G}$ does not contain self-loops (i.e., $\mathcal{E}$ does not contain any edge of the form $(i,i)$, $\forall~i\in \mathcal{V}$), and there exists only one directed edge from one vertex to its each out-neighbour. In a simple graph, a directed \textit{path} connecting vertices
	$i_0$ and $i_n$ is a sequence of  edges of the form $(i_{k-1}, i_k)$, $k = 1, \cdots, n$, and every vertex appears at most once on the path. 
	Particularly, a single vertex can also be regarded as a special path of length $0$.
	Two directed paths are \textit{vertex-disjoint} if they do not share any common vertex, including the start and the end vertices.  
	

	\subsection{Dynamic Network Model}
	
	Consider a dynamic network whose topology is captured by a simple directed graph $\mathcal{G} = (\mathcal{V}, \mathcal{E})$ with vertex set
	$\mathcal V = \{1, 2, \cdots , L\}$ and edge set $\mathcal E \subseteq \mathcal V \times \mathcal V$. Following the basic setup in  \cite{paul2013netid,Bazanella2019CDC}, each vertex describes by an internal variable $w_j(t) \in \mathbb{R}$, and 
	a compact form of the overall network dynamics is 
	\begin{align} \label{eq:net}
	w(t) & = G(q) w(t) +  R r(t) + v_\mathrm{e}(t), \\
	y(t) & = C w(t) + v_\mathrm{m}(t), \nonumber
	\end{align}
	where $q^{-1}$ is the delay operator, and $w(t): = \begin{bmatrix}
	w_1(t) & w_2(t)& \cdots & w_L(t)
	\end{bmatrix}^\top$ collects all the internal signals. $G(q)$ is a hollow transfer matrix, in which the $(i,j)$-th entry, denoted by $G_{ij}(q) \in \mathbb{R}(q)$, indicates the transfer operator from vertex $j$ to vertex $i$.
	
	Let $\mathcal{R} \subseteq \mathcal{V}$ and $\mathcal{C} \subseteq \mathcal{V}$ be the vertices that are excited and measured, respectively, and $K = |\mathcal{R}|$ and $N = |\mathcal{C}|$. The signals $r(t) \in \mathbb{R}^K$ and $y(t) \in \mathbb{R}^N$ are the external excitation and measurement signals with $R  \in \mathbb{R}^{L \times K}$, $C \in \mathbb{R}^{N \times L}$ binary   matrices indicating which vertices are excited or measured. Specifically, $R$ and $C$  consist of the columns and rows of the identity matrix indexed by the set $\mathcal{R}$ and $\mathcal{C}$, respectively. The excitation and measurement noises are represented by $v_\mathrm{e}(t) \in \mathbb{R}^L$, $v_\mathrm{m}(t)\in \mathbb{R}^L$, respectively. 
	
	\begin{assu} \label{Assum}
		Throughout the paper, we consider the following standard assumptions for dynamic networks (see also  e.g., \cite{weerts2018identifiability,weerts2018single}).
		\begin{enumerate}
			
			\item The network \eqref{eq:net} is \textit{well-posed} and stable, i.e., $(I - G(q))^{-1}$ is proper
			and stable.
			
			\item The function $G_{ji}(q)$ is nonzero if and only if $(i, j) \in \mathcal{E}$.
			
			\item All the entries of $G(q)$ are proper and stable transfer operators.
			
		\end{enumerate}
	\end{assu}
	
	In the context of network identification, we consider $M = (G, R, C)$ to be a network model of \eqref{eq:net}, where all the nonzero entries in $G$ are parametrized independently. Therefore, we obtain a network model set
	\begin{equation} \label{eq:modelset}
	\mathcal{M}: = \{M(q,\theta) = (G(q,\theta), R, C), \theta \in \Theta\}.
	\end{equation} 
	Denote the transfer matrix
	\begin{equation} \label{eq:T}
	T(q, \theta) : = (I - G(q,\theta))^{-1},
	\end{equation} 
	and $T_{\mathcal{C},\mathcal{R}}$ is the submatrix of $T$ containing the rows and columns of $T$ indexed by $\mathcal{C}$ and $\mathcal{R}$, respectively.
	The network identifiability is thereby defined as follows.
	\begin{defn}[Network identifiability]
		\label{defn:netid} 
		The network model set $\mathcal{M}$ in \eqref{eq:modelset} is identifiable from the submatrix $T_{\mathcal{C},\mathcal{R}}$ at $M_0: = M(\theta_0)$ with $\theta_0 \in \Theta$
		if the
		implication
		\begin{align}\label{eq:implication}
		C T(q, \theta_1) R =  C T(q, \theta_0) R
		\Rightarrow
		M(q,\theta_1) = M(q,\theta_0),
		\end{align}
		holds for all parameters $\theta_1 \in \Theta$. Furthermore, the network model set $\mathcal{M}$ is identifiable from  $T_{\mathcal{C},\mathcal{R}}$ if \eqref{eq:implication}
		holds for all $\theta_0 \in \Theta$.
	\end{defn}

	As a relevant concept, generic identifiability of the network model set $\mathcal{M}$ is defined when the
	implication \eqref{eq:implication} holds for almost all $\theta_0 \in \Theta$\footnote{``Almost all'' refers to the exclusion of parameters that are in a subset of $\Theta$ with Lebesgue measure zero.}, see more details in \cite{bazanella2017identifiability,hendrickx2018identifiability,weerts2018single}. 
	\begin{rem}	
		It is worth noting that while this paper considers only excitation input $r$ in \eqref{eq:net}, disturbances can also be taken into account as \cite{cheng2019allocation}. Under some mild assumptions, disturbance inputs play a similar role as the excitation inputs, and thereby the results in this paper can be directly generalised to the disturbance case.
	\end{rem}
	Under Assumption~\ref{Assum}, the measured
	signals $y(t)$ and $r(t)$ lead to unique object $T_{\mathcal{C},\mathcal{R}}(z)$. Therefore, identifiability reflects the ability to distinguish between models in the set $\mathcal{M}$ from measurement data, or more preciously, from the transfer matrix $T_{\mathcal{C},\mathcal{R}}$ as described in Definition~\ref{defn:netid}. In this sense, network identifiability essentially depends on the presence and location of external excitation signals $r$ and the selection of measured vertex signals $y$.

	\section{Existing Necessary Conditions}
	

	A primary necessary condition has been given in  \cite{Bazanella2019CDC} as follows.
	\begin{lem} \label{lem:exormeas} \cite{Bazanella2019CDC}
		Any network model set $\mathcal{M}$ is identifiable \textit{only if} $\mathcal{V} = \mathcal{R} \cup \mathcal{C}$.
	\end{lem}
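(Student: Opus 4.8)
\emph{Proof plan.} The plan is to establish the contrapositive: if there is a vertex $k\in\mathcal{V}$ with $k\notin\mathcal{R}\cup\mathcal{C}$, then $\mathcal{M}$ fails to be identifiable at \emph{every} $\theta_0\in\Theta$. The mechanism is a diagonal gain rescaling around the ``hidden'' vertex $k$ that leaves the observed submatrix $T_{\mathcal{C},\mathcal{R}}$ untouched while changing the modules incident to $k$. Concretely, I would fix $\theta_0\in\Theta$, write $G:=G(q,\theta_0)$ and $T:=(I-G)^{-1}$, pick a real constant $\alpha\notin\{0,1\}$, and set $D:=\diag(d_1,\dots,d_L)$ with $d_k=\alpha$ and $d_i=1$ for $i\neq k$. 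The candidate perturbed network matrix is $\tilde G:=D\,G\,D^{-1}$.

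I would then verify three things. (i) \emph{$\tilde G$ is an admissible model.} Since $D$ is diagonal, $\tilde G_{ij}=d_i G_{ij} d_j^{-1}$, which is nonzero exactly when $G_{ij}$ is; hence $\tilde G$ is hollow with the same edge set as $G$, and as $\alpha$ is a nonzero real its entries stay proper and stable, while $(I-\tilde G)^{-1}=D(I-G)^{-1}D^{-1}=D\,T\,D^{-1}$ is proper and stable, so Assumption~\ref{Assum} still holds; thus $\tilde G=G(q,\theta_1)$ for some $\theta_1\in\Theta$. (ii) \emph{The observation is unchanged.} From $\tilde T:=(I-\tilde G)^{-1}=D\,T\,D^{-1}$ together with $CD=C$ (because $d_i=1$ for all $i\in\mathcal{C}$) and $D^{-1}R=R$ (because $d_j=1$ for all $j\in\mathcal{R}$), one gets $C\,T(q,\theta_1)\,R=C\tilde T R=C T R=C\,T(q,\theta_0)\,R$. (iii) \emph{The model is different.} Assuming $k$ has at least one incident edge, say $(j,k)\in\mathcal{E}$, Assumption~\ref{Assum} gives $G_{kj}\neq 0$, so $\tilde G_{kj}=\alpha G_{kj}\neq G_{kj}$ (the out-edge case $(k,i)\in\mathcal{E}$ is symmetric, with $\tilde G_{ik}=\alpha^{-1}G_{ik}$); hence $M(q,\theta_1)\neq M(q,\theta_0)$. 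Combining (i)--(iii), the implication \eqref{eq:implication} fails at this $\theta_0$, and since $\theta_0\in\Theta$ was arbitrary, $\mathcal{M}$ is not identifiable (nor generically identifiable).

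The step that needs care — and which I expect to be the only real obstacle — is (i): asserting $\tilde G=G(q,\theta_1)$ for some $\theta_1\in\Theta$ relies on the standing convention that $\Theta$ parametrises all module configurations consistent with the graph and Assumption~\ref{Assum}, and is in particular invariant under multiplying individual modules by nonzero real constants; this is exactly where one uses that only the nonzero entries of $G$ are free and that they are parametrised independently. The degenerate case where $k$ is isolated carries no module to perturb and is excluded under the usual convention that the network graph has no isolated vertices; with that convention the argument above is complete.
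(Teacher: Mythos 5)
Your diagonal-rescaling argument ($\tilde G = D G D^{-1}$ with $d_k=\alpha\neq 1$ at the hidden vertex, so that $C\tilde T R = CDTD^{-1}R = CTR$ while an incident module is scaled by $\alpha$) is correct, and it is essentially the standard proof of this fact; the paper itself states the lemma by citation to \cite{Bazanella2019CDC} without reproducing a proof, and your construction matches the argument used there. Your two caveats (that $\Theta$ must be closed under nonzero constant scaling of individual modules, and that an isolated vertex is a degenerate exception) are the right ones to flag and do not affect the result in the intended setting.
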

	This condition means that to identify all the parametrised entries in $G$, each vertex in $\mathcal{G}$ is either excited or measured.

	In other studies including \cite{hendrickx2018identifiability,weerts2018identifiability,shi2020IFAC}, necessary conditions for network identifiability have been provided in the setting of either full excitation or full measurement. Combining these works in the two settings, we immediately obtain a necessary condition for identifiability in the case of partial excitation and measurement.
	
	\begin{pro}
		\label{pro:necessary1}
		Consider the network model set $\mathcal{M}$ in \eqref{eq:modelset} with $\mathcal{R} \subseteq \mathcal{V}$ and $\mathcal{C} \subseteq \mathcal{V}$ the excited and measured vertices. If $\mathcal{M}$ is identifiable, then 
		\begin{subequations} \label{eq:rankcond}
			\begin{align}
			\rank \left(T_{\mathcal{N}_i^-,\mathcal{R}}(q, \theta)\right) = |\mathcal{N}_i^-|, 
			\label{eq:rankcond1}
			\\ \text{and} \ 
			\rank \left(T_{\mathcal{C},\mathcal{N}_i^+}(q,\theta)\right) = |\mathcal{N}_i^+|,
			\label{eq:rankcond2}
			\end{align}
		\end{subequations}
		hold for each $i \in \mathcal{V}$ and for all $\theta \in \Theta$.
	\end{pro}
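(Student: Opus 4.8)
The plan is to establish \eqref{eq:rankcond} by contraposition, and in fact to recover it \emph{immediately} from the two classical special cases. First I would observe that identifiability of $\mathcal{M}$ from the small object $T_{\mathcal{C},\mathcal{R}}$ implies identifiability from each of the larger objects $T_{\mathcal{V},\mathcal{R}}$ (all vertices measured, $\mathcal{R}$ excited) and $T_{\mathcal{C},\mathcal{V}}$ (all vertices excited, $\mathcal{C}$ measured): if $CT(\theta_1)R = CT(\theta_0)R$ already forces $M(\theta_1)=M(\theta_0)$, then so does the stronger hypothesis $T(\theta_1)R = T(\theta_0)R$, and likewise $CT(\theta_1)=CT(\theta_0)$. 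Invoking the known necessary condition for the full-measurement, partial-excitation setting gives \eqref{eq:rankcond1}, and its dual for the full-excitation, partial-measurement setting gives \eqref{eq:rankcond2}, for every $i\in\mathcal{V}$ and every $\theta\in\Theta$ \cite{hendrickx2018identifiability,weerts2018identifiability,shi2020IFAC}.

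To make the argument self-contained I would spell out the two underlying perturbations, which are dual to each other. Suppose \eqref{eq:rankcond1} fails at some $\theta_0$, so the $|\mathcal{N}_i^-|\times K$ rational matrix $T_{\mathcal{N}_i^-,\mathcal{R}}(q,\theta_0)$ has a nonzero left-kernel vector $v^\top\in\mathbb{R}(q)^{|\mathcal{N}_i^-|}$, i.e. $v^\top T_{\mathcal{N}_i^-,\mathcal{R}}(q,\theta_0)=0$. I would scale $v$ by a small real number (and, if needed, by a proper stable scalar) so that $G_{i,\mathcal{N}_i^-}(q,\theta_0)+v^\top$ is again a row of proper stable transfer functions, then embed $v$ into the $i$-th row of $G$ in the positions indexed by $\mathcal{N}_i^-$. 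This rank-one modification $\tilde G=G+e_i v_{\mathrm{full}}^\top$ respects the sparsity pattern, hence defines some $\theta_1$ with $M(q,\theta_1)\neq M(q,\theta_0)$, and by the Sherman--Morrison identity
\begin{equation}
\tilde T \;=\; T + \frac{T_{:,i}\,\bigl(v^\top T_{\mathcal{N}_i^-,:}\bigr)}{1 - v^\top T_{\mathcal{N}_i^-,i}},
\end{equation}
so that $\tilde T_{:,\mathcal{R}}=T_{:,\mathcal{R}}$ because $v^\top T_{\mathcal{N}_i^-,\mathcal{R}}=0$; in particular $C\tilde T R = CTR$, contradicting identifiability at $\theta_0$. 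Dually, if \eqref{eq:rankcond2} fails at $\theta_0$, I would take a nonzero right-kernel vector $u\in\mathbb{R}(q)^{|\mathcal{N}_i^+|}$ of $T_{\mathcal{C},\mathcal{N}_i^+}(q,\theta_0)$, embed it into the $i$-th column of $G$ in the positions $\mathcal{N}_i^+$, and the analogous rank-one update $\tilde G=G+u_{\mathrm{full}}e_i^\top$ yields $\tilde T_{\mathcal{C},\mathcal{R}}=T_{\mathcal{C},\mathcal{R}}$ since $T_{\mathcal{C},\mathcal{N}_i^+}u=0$, again violating identifiability.

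The main obstacle is the bookkeeping that keeps the perturbed model inside $\mathcal{M}$: $\tilde G$ and $\tilde T=(I-\tilde G)^{-1}$ must remain proper and stable and $\theta_1$ must lie in $\Theta$. I would handle this by writing the kernel vector as $\mu$ times a fixed rational vector; the displayed identity then shows $\tilde T\to T$ uniformly on the stability region as $\mu\to0$ while the scalar $1-v^\top T_{\mathcal{N}_i^-,i}$ stays bounded away from zero, so all parts of Assumption~\ref{Assum} persist for small $\mu\neq0$ while still $M(q,\theta_1)\neq M(q,\theta_0)$. The only delicate point in the short ``reduction'' route is to check that the cited necessary conditions are stated for exactly the excitation-to-measurement map used in Definition~\ref{defn:netid}, rather than for a setting that also carries a noise model; this matching holds in the references used here.
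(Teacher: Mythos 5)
Your proposal is correct and follows essentially the same route as the paper: the paper's proof is a one-line reduction to the known necessary condition of the full-measurement setting (Theorem~2 of \cite{weerts2018identifiability}) together with its dual, which is exactly your first paragraph. Your subsequent rank-one (Sherman--Morrison) perturbation argument simply fills in the content of that cited theorem, and the care you take with scaling, sparsity, properness and stability is sound.
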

	\begin{proof}
		The necessity of \eqref{eq:rankcond1} can be proved following a reasoning similar to Theorem~2 in  \cite{weerts2018identifiability} for the full measurement case. Then, the necessity of \eqref{eq:rankcond2} is also validated, following from a dual analysis.  
	\end{proof}
	

	
	Generally, the study of the necessary condition for network identifiability in the partial excitation and measurement setting is rarely addressed.
	The available necessary conditions can be rather loose in determining identifiability of general networks. For instance, these conditions cannot even be used to check identifiability of the four-vertex dynamic network in Fig.~\ref{fig:fournodes}.
	
	\begin{exam}
		\label{ex:4nodes}
		In the network shown in Fig.~\ref{fig:fournodes}, $\mathcal{R} = \{1, 2\}$ and $\mathcal{C} = \{3, 4\}$. The matrix $T$  defined in \eqref{eq:T} is computed as
		\begin{equation*}
		T = \begin{bmatrix}
		1 & 0 & 0 & 0 \\
		G_{21} & 1 & 0 &0\\
		G_{31} & 0 & 1  &0\\
		G_{21}G_{42} + G_{31}G_{43} & G_{42} & G_{43} & 1
		\end{bmatrix}.
		\end{equation*} 
		It is not hard to verify that all the necessary conditions in Lemma~\ref{lem:exormeas} and Proposition~\ref{pro:necessary1} are fulfilled. However, the model set of this network $\mathcal{M}$ is not identifiable, which can simply be seen from the submatrix
		\begin{equation*}
		T_{\mathcal{C},\mathcal{R}} = \begin{bmatrix}
		G_{31} & 0 \\
		G_{21}G_{42} + G_{31}G_{43} & G_{42} 
		\end{bmatrix}.
		\end{equation*}
		Note that identifiability of $\mathcal{M}$ essentially requires to obtain a unique solution of the four unknown  modules $G_{12}$, $G_{24}$, $G_{31}$, and $G_{43}$ from the entries of $T_{\mathcal{C},\mathcal{R}}$. However,  $T_{\mathcal{C},\mathcal{R}}$ has a zero entry, thus it is impossible to identify four modules from it.
		
		\begin{figure}[!tp]
			\centering
			\includegraphics[width=0.2\textwidth]{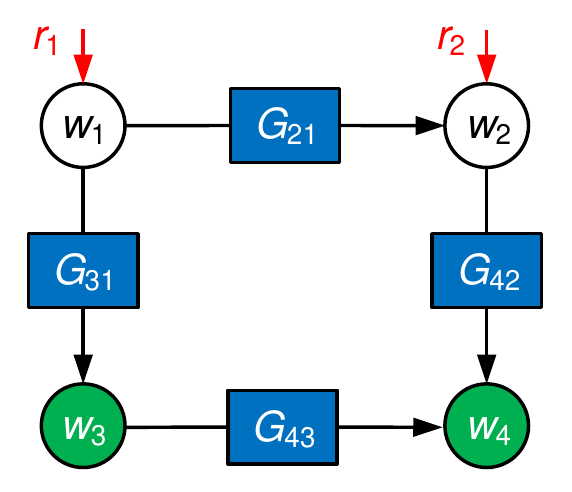}
			\caption{A dynamic network that satisfies the necessary conditions in Lemma~\ref{lem:exormeas} and Proposition~\ref{pro:necessary1} but is not identifiable.}
			\label{fig:fournodes}
		\end{figure}%
	\end{exam}

	\section{Main Results}
	\label{sec:result}
	In this section, we present a novel necessary condition for network identifiability and further perform the analysis to circular networks, yielding a necessary and sufficient condition.
	
	\subsection{A Necessary Condition for Network Identifiability}

	In this section, we derive a necessary condition for network identifiability, motivated by Example~\ref{ex:4nodes}, that is based on the number of the nonzero transfer functions as the entries in $T_{\mathcal{C},\mathcal{R}}$. In the transfer matrix $T$, we have three categories of entries, namely , `0', `1', and \textit{nonconstant elements} that are represented as functions of modules $G_{ji}$, e.g., the element $T_{41}$ in Example~\ref{ex:4nodes} is a function of the modules as $T_{41} = G_{21}G_{42} + G_{31}G_{43}$. For any $i \neq j$, the entry $T_{ji}$ is nonzero if there is at least a directed path from $i$ to $j$, and $T_{ji} = 0$ otherwise. Furthermore, a diagonal entry of $T$, $T_{ii} \ne 1$ if there exists a directed cycle that starts and ends at vertex $i$, and $T_{ii} = 1$ otherwise.

		Identifiability essentially reflects whether 
		we can uniquely solve all the modules in $G$ from the entries of $T_{\mathcal{C},\mathcal{R}}$. Note that the $0$ or $1$ entries in $T$ do not contain any information of the modules and thus are not useful for identifiability. Let $\xi$ be the number of nonconstant elements in $T_{\mathcal{C},\mathcal{R}}$. A immediate necessary condition for identifiability of  is 
	\begin{equation} \label{eq:naive}
	\xi \geq |\mathcal{E}|,
	\end{equation}
	where $\mathcal{E}$ is the edge set of the network. 
	This simple condition can determine identifiability of the network in Fig.~\ref{fig:fournodes}. It is found that $\xi = 3$, while there are $4$ edges, and therefore this does not conform with \eqref{eq:naive}. Consequently, the network model set in Example~\ref{ex:4nodes} is not identifiable. However, the condition in \eqref{eq:naive} is obviously not enough to verify the identifiability of a dynamic network in a more general setting. For instance, it is incapable to handle the network in Fig.~\ref{fig:8nodes}. This network example will also be used as a lead-in to our new necessary condition to be developed later in this section.

	\begin{exam}
		\label{ex:8nodes}
		We consider a simple network in Fig.~\ref{fig:8nodes}, where $\mathcal{R} = \{1, 2, 3, 4\}$ and $\mathcal{C} = \{5, 6, 7, 8\}$.   
		To the best of our knowledge, there is currently no available condition in the literature that  can  determine identifiability of the model set.
		It is clear that this network satisfies all the available necessary conditions in Lemma~\ref{lem:exormeas} and Proposition~\ref{pro:necessary1}. Furthermore, there are $10$ nonconstant elements that is equal to the number of unknown modules, i.e. $|\mathcal{E}| = 10$. Thus, \eqref{eq:naive} is also satisfied. 
		
		Here, we need a further analysis to determine whether this network model set is identifiable or not. 
		Note that we can identify the submatrix of $T: = (I - G)^{-1}$ from measurement data $(r, y)$ as
		\begin{equation*}
		T_{\mathcal{C},\mathcal{R}} =  \begin{bmatrix}
		T_{51} &  0     & 0      & 0    \\
		T_{61} & T_{62} & 0      & 0    \\
		T_{71} & T_{72} & T_{73} & 0    \\
		T_{81} & T_{82} & T_{83} & T_{84}
		\end{bmatrix},
		\end{equation*}
		where 
		$T_{51} = G_{51}$, $T_{62} = G_{62}$, $T_{73} = G_{73}$, $T_{84} = G_{84}$,
		$T_{61} = G_{62} G_{21} + G_{65} G_{51}$,
		$T_{72} = G_{73} G_{32} + G_{76} G_{62}$,
		$T_{83} = G_{84} G_{43} + G_{87} G_{73}$,
		$T_{71} = G_{73} G_{32} G_{21} + G_{76}  G_{62} G_{21} + G_{76} G_{65} G_{51}$,
		$T_{82} = G_{84} G_{43} G_{32} + G_{87} G_{73} G_{32} + G_{87} G_{76} G_{62}$,
		$T_{81} = G_{84} G_{43} G_{32} G_{21} + G_{87} G_{73} G_{32} G_{21} + G_{87} G_{76} G_{62} G_{21}  + G_{87} G_{76} G_{65} G_{51}
		$. 
		
		It can be verified that
		\begin{align} \label{ex:8nodes_det}
		T_{83} = T_{73}(T_{61} T_{82} - T_{62} T_{81}) (T_{61} T_{72} - T_{62} T_{71})^{-1},
		\end{align}
		which implies that the information of $T_{83}$ is redundant in identifying the unknown modules in the network, since it can be represented by the other nonzero elements in $T_{{\mathcal{C}}
			,{\mathcal{R}}}$. 
		We thus have 10 equations that are not independent, from which it is impossible to solve 10 unknown modules. As a result, the network model set is not identifiable.
		
		\begin{figure}[!tp]
			\centering
			\includegraphics[width=0.38\textwidth]{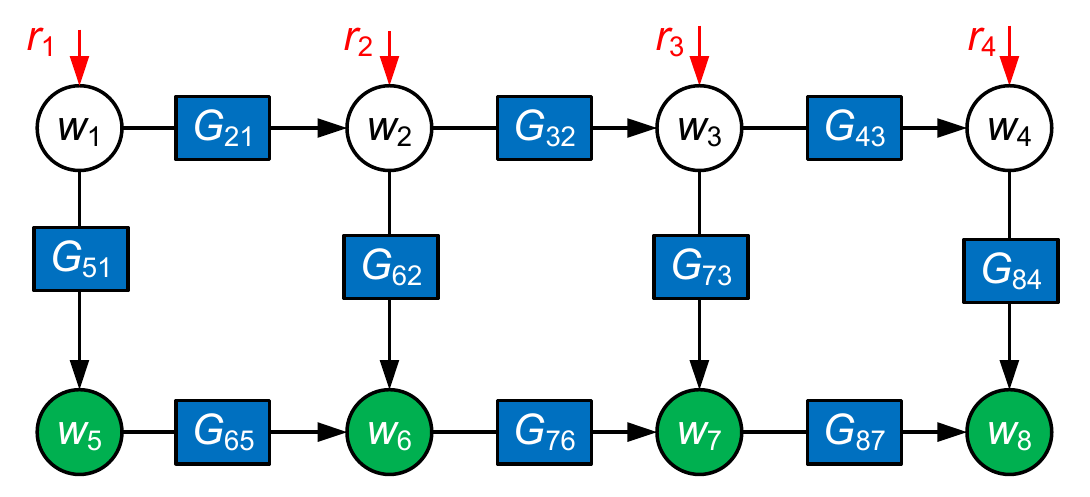}
			\caption{A network that satisfies the necessary conditions in Lemma~\ref{lem:exormeas}, Proposition~\ref{pro:necessary1} and \eqref{eq:naive} but is not identifiable.}
			\label{fig:8nodes}
		\end{figure}%

	\end{exam}
	
	Motivated by Example~\ref{ex:8nodes}, we aim to find a tighter necessary condition for network identifiability, of which the necessity is featured by the independence of a set of rational functions. To this end, we define two sets associated with a dynamic network \eqref{eq:net}. The unknown modules $G_{ij}$ in the network are viewed as indeterminate variables, leading to a set of unknown modules:
	$$
	\mathscr{X} : = \{G_{ji}(q) \mid (i,j) \in \mathcal{E} \},
	$$
	with $\mathcal{E}$ the edge set of the network. Then the nonconstant elements of $T_{\mathcal{C},\mathcal{R}}$ form a set of rational functions on $\mathscr{X}$, 
	denoted by 
	\begin{equation} \label{eq:Fset}
	\mathscr{F}  = \{T_{\ell k } \mid T_{\ell k } \neq 0~\text{and}~T_{\ell k } \neq 1, \ell\in \mathcal{C}, k \in \mathcal{R}\},
	\end{equation}
	where $|\mathscr{F}| = \xi$ with $\xi$ in \eqref{eq:naive}.
	The rational functions in $\mathscr{F}$ are \textit{dependent}, if there is a function in $\mathscr{F}$ that can be represented by the other functions in $\mathscr{F}$ with elementary arithmetic operations (i.e. addition, subtraction, multiplication, and division).

	For instant, the 10 functions in Example~\ref{ex:8nodes} are dependent due to  \eqref{ex:8nodes_det}. In order to analyse identifiability of the network, we can first remove dependent functions in $\mathscr{F}$, e.g., $T_{83}$ in Example~\ref{ex:8nodes}, to obtain a reduced set $\hat{\mathscr{F}}$, and then compare the cardinality of $\hat{\mathscr{F}}$ with the number of edges in the network. A key question therefore arises, that is how to \textit{identify the dependency of the functions in $\mathscr{F}$}?

	To address this question, we adopt the concept of \textit{structural matrix} from e.g., \cite{Steffen2005Structural} to capture the nonzero pattern of $T$.  
	\begin{defn} [Structural matrix and structural rank]
		\label{defn:strucmat}
		The nonzero pattern of $T$ in \eqref{eq:T} can be characterised by a so-called structural matrix ${S}$ defined as
		\begin{equation} \label{eq:strucmat}
		S_{ij} =	\begin{cases}
		0, & \text{if}~T_{ij}(q) = 0,   \\
		*, & \text{otherwise}. 
		\end{cases}
		\end{equation}
		The structural rank of $T$, denoted by $\sprank(T)$, is the highest rank of all matrices with the same nonzero pattern as $S$.
	\end{defn}
	For an $L \times L$ structural matrix $T$, it is full structural rank if there are $L$ nonzero entries in $T$ that do not share common rows or columns \cite{Steffen2005Structural}. In other words,  there is a permutation of $S$ with all nonzero diagonal elements.
	
	To testify the dependency of the functions in $\mathscr{F}$ defined in \eqref{eq:Fset}, we consider $T_{\mathcal{C},\mathcal{R}}$ and check each submatrix of  $T_{\mathcal{C},\mathcal{R}}$ that has full structural rank.

	\begin{lem} \label{lem:struct}
		Consider any transfer matrix $T$ and the function set $\mathscr{F}$ in \eqref{eq:Fset}. The nonconstant rational functions in $\mathscr{F}$ are dependent, if there are two subsets
		$\bar{\mathcal{C}} \subseteq \mathcal{C}$ and 
		$\bar{\mathcal{R}} \subseteq \mathcal{R}$ such that
		\begin{equation} \label{eq:struct}
		\rank (T_{\bar{\mathcal{C}},\bar{\mathcal{R}}}) < \sprank (T_{\bar{\mathcal{C}},\bar{\mathcal{R}}}) = |\bar{\mathcal{C}}| = |\bar{\mathcal{R}}|.
		\end{equation}
	\end{lem}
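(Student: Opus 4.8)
\emph{Proof idea.} The plan is to convert the rank deficiency in \eqref{eq:struct} into an explicit expression of one nonconstant entry of $T_{\mathcal{C},\mathcal{R}}$ as a rational combination of the others, exploiting that the determinant is affine in each individual entry. Write $A := T_{\bar{\mathcal{C}},\bar{\mathcal{R}}}$, which by \eqref{eq:struct} is square of order $n := |\bar{\mathcal{C}}| = |\bar{\mathcal{R}}|$. On the one hand $\sprank(A) = n$ means the bipartite zero/nonzero pattern of $A$ carries a perfect matching $\sigma$, so the Leibniz expansion $\det A = \sum_{\pi} \mathrm{sgn}(\pi)\prod_{\ell} A_{\ell\pi(\ell)}$ contains the nonvanishing term indexed by $\sigma$, hence $\det A$, viewed as a polynomial in the entries, is not identically zero. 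On the other hand $\rank(A) < n$ forces $\det A \equiv 0$ as a rational function of $q$. Thus $\det A \equiv 0$ is a \emph{nontrivial} polynomial relation among the entries $\{A_{\ell k} = T_{\ell k} : \ell\in\bar{\mathcal{C}},\, k\in\bar{\mathcal{R}}\}$, each of which is an entry of $T_{\mathcal{C},\mathcal{R}}$, i.e.\ equals $0$, equals $1$, or lies in $\mathscr{F}$.

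Next I would pass to a \emph{minimal} witness: among all $(\bar{\mathcal{C}},\bar{\mathcal{R}})$ with $\bar{\mathcal{C}}\subseteq\mathcal{C}$, $\bar{\mathcal{R}}\subseteq\mathcal{R}$ obeying \eqref{eq:struct}, pick one with $n$ smallest (only rows and columns are ever deleted, so the choice stays inside $\mathcal{C}$ and $\mathcal{R}$). Deleting from $A$ a matched pair $(\ell,\sigma(\ell))$ leaves an $(n-1)\times(n-1)$ matrix whose pattern still has the restricted perfect matching, hence full structural rank $n-1$; by minimality it cannot be rank-deficient, so it is nonsingular. Two consequences follow: (i) $\rank(A) = n-1$ exactly, since a larger gap would, by the same deletion, produce a strictly smaller witness of \eqref{eq:struct}; and (ii) for every $\ell\in\bar{\mathcal{C}}$ the complementary minor $M_\ell := \det T_{\bar{\mathcal{C}}\setminus\{\ell\},\,\bar{\mathcal{R}}\setminus\{\sigma(\ell)\}}$ is a nonzero rational function.

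Then I would isolate a nonconstant entry. Some matched entry $A_{\ell^\ast\sigma(\ell^\ast)}$ must be nonconstant: matched entries are structurally nonzero, and for $T$ of the form \eqref{eq:T} the only identically-constant nonzero entries are the diagonal $1$'s, because an off-diagonal entry of $(I-G)^{-1}$ vanishes when all modules are set to zero (its numerator is a determinant with a zero column), hence is either $\equiv 0$ or nonconstant. Were every matched entry such a $1$, then $\sigma$ would be the identity, $\bar{\mathcal{C}}=\bar{\mathcal{R}}$, and $\det A = 1 + (\text{a sum of products each carrying a nonconstant off-diagonal factor})$, which equals $1\neq 0$ at the all-zero-module evaluation, contradicting $\det A\equiv 0$. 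Fixing such an $\ell^\ast$, expand $\det A$ along row $\ell^\ast$: as $A_{\ell^\ast\sigma(\ell^\ast)}$ occurs there linearly with coefficient $\pm M_{\ell^\ast}$ and $M_{\ell^\ast}\not\equiv 0$, the identity $\det A \equiv 0$ rearranges to
\begin{equation*}
T_{\ell^\ast\sigma(\ell^\ast)} \;=\; -\,\frac{1}{M_{\ell^\ast}}\sum_{k\in\bar{\mathcal{R}}\setminus\{\sigma(\ell^\ast)\}} (-1)^{\ell^\ast+k}\, T_{\ell^\ast k}\,\det T_{\bar{\mathcal{C}}\setminus\{\ell^\ast\},\,\bar{\mathcal{R}}\setminus\{k\}} .
\end{equation*}
The right-hand side is assembled by $+,-,\times,\div$ from the remaining entries of $A$, all of which are entries of $T_{\mathcal{C},\mathcal{R}}$ — hence elements of $\mathscr{F}$ together with the innocuous constants $0$ (those terms drop out) and $1$ (equal to $g/g$ for any $g\in\mathscr{F}$). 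This exhibits a member of $\mathscr{F}$ as a rational combination of the others, so $\mathscr{F}$ is dependent.

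The determinant bookkeeping is routine; the genuine difficulty is this last step — guaranteeing \emph{simultaneously} that the entry solved for is actually nonconstant (so it belongs to $\mathscr{F}$ rather than being a trivial $0$ or $1$) and that the minor playing the role of denominator is not identically zero. The minimal-witness reduction is exactly what reconciles these two requirements, and it is also the only place where the specific structure $T=(I-G)^{-1}$ in \eqref{eq:T} (via Assumption~\ref{Assum}) is needed, namely to exclude identically-constant nonzero off-diagonal entries.
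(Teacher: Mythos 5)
Your proof is correct and rests on the same core idea as the paper's: full structural rank of the square submatrix $T_{\bar{\mathcal{C}},\bar{\mathcal{R}}}$ guarantees that its determinant, as a polynomial in the entries, is not trivially zero, while the rank deficiency forces it to vanish identically, yielding a nontrivial relation among the entries of $T_{\mathcal{C},\mathcal{R}}$. The paper's proof stops at that observation; you go further and check that the relation actually satisfies the paper's own definition of dependence, namely that some element of $\mathscr{F}$ (rather than a constant $0$ or $1$) can be \emph{solved for} in terms of the remaining entries with a denominator that is not identically zero. The minimal-witness reduction (to guarantee the complementary minor $M_{\ell^\ast}\not\equiv 0$) and the argument that a nonzero off-diagonal entry of $(I-G)^{-1}$ cannot be a nonzero constant (it vanishes at the all-zero-module evaluation) are exactly the details the paper's two-line proof glosses over, and they are genuinely needed for the conclusion as literally defined; your version implicitly adopts the reading, consistent with the paper's framing of $\mathscr{F}$ as functions on the indeterminates $\mathscr{X}$, that ranks and identities are taken over the module indeterminates rather than at a fixed $\theta$. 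So: same skeleton, but your write-up closes real gaps rather than taking a different route.
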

	\begin{proof}
		The two equalities in \eqref{eq:struct} mean that $T_{\bar{\mathcal{C}},\bar{\mathcal{R}}}$ is a square matrix that has full structural rank. Therefore, the determinant $\det (T_{\bar{\mathcal{C}},\bar{\mathcal{R}}})$ is a function that is not consistently zero and can be written as combination of the functions in the set $\mathscr{F}$.
		While the inequality holds, i.e. $T_{\bar{\mathcal{C}},\bar{\mathcal{R}}}$ is not full rank, the determinant function $\det (T_{\bar{\mathcal{C}},\bar{\mathcal{R}}}) = 0$, from which a dependency of the functions  as components of $\det (T_{\bar{\mathcal{C}},\bar{\mathcal{R}}})$ is obtained. 
	\end{proof}
	
	We illustrate this lemma by using Example~\ref{ex:8nodes}. Consider a $3 \times 3$ matrix $T_{\bar{\mathcal{C}},\bar{\mathcal{R}}}$ as the submatrix of $T_{\mathcal{C},\mathcal{R}}$
	with $\bar{\mathcal{R}} = \{1, 2, 3\}$ and $\bar{\mathcal{C}} = \{6, 7, 8\}$. The structural matrix of $T_{\bar{\mathcal{C}},\bar{\mathcal{R}}}$ is given as
	\begin{equation*}
	S_{\bar{\mathcal{C}},\bar{\mathcal{R}}} = \begin{bmatrix}
	* & * & 0 \\
	* & * & * \\
	* & * & *
	\end{bmatrix},
	\end{equation*}
	implying that $\sprank(T_{\bar{\mathcal{C}},\bar{\mathcal{R}}}) = 3$. However, it follows from \eqref{ex:8nodes_det} that {$\det (T_{\bar{\mathcal{C}},\bar{\mathcal{R}}}) = T_{83} (T_{61} T_{72} - T_{62} T_{71}) - T_{73}(T_{61} T_{82} - T_{62} T_{81}) = 0$}, i.e. $\rank(T_{\bar{\mathcal{C}},\bar{\mathcal{R}}}) < 3$. Therefore, the nonconstant elements in $T_{\bar{\mathcal{C}},\bar{\mathcal{R}}}$ are dependent. 
	
	\begin{rem}
		To identify dependent functions in $\mathscr{F}$, we need to check the rank of each submatrix in $T_{\mathcal{C},\mathcal{R}}$, since the full rank of $T_{\mathcal{C},\mathcal{R}}$ does not guarantee the full rank of its submatrices. One example is $T_{\mathcal{C}, \mathcal{R}}$ in Example~\ref{ex:8nodes} that has full rank, while the submatrix $T_{\bar{\mathcal{C}},\bar{\mathcal{R}}}$ is rank deficient.
	\end{rem}
	
	To derive a tighter necessary condition for network identifiability than \eqref{eq:naive}, we proceed to an iterative elimination of the entries in $T_{\mathcal{C},\mathcal{R}}$, when dependency of the nonconstant element of $T_{\mathcal{C},\mathcal{R}}$ is found. A detailed scheme is described as follows.
	\begin{algorithm}
		\caption{Iterative Elimination of Dependent Functions}
		\begin{algorithmic}[1]
			\STATE \textbf{initialize} $\hat{T} = T_{\mathcal{C},\mathcal{R}}$, and the set $\mathscr{F}$ as defined in \eqref{eq:Fset}.
			
			\REPEAT
			\STATE Find $\bar{\mathcal{C}} \subseteq \mathcal{C}$ and 
			$\bar{\mathcal{R}} \subseteq \mathcal{R}$ such that 
			\begin{equation} \label{eq:rankdeg1}
			\rank (\hat{T}_{\bar{\mathcal{C}},\bar{\mathcal{R}}}) < \sprank (\hat{T}_{\bar{\mathcal{C}},\bar{\mathcal{R}}}) = |\bar{\mathcal{C}}| = |\bar{\mathcal{R}}|.
			\end{equation}
			
			\STATE Select a pair of $i \in \bar{\mathcal{R}}$,  $j \in \bar{\mathcal{C}}$  and remove the element $T_{ji}$ in $\mathscr{F}$, and let the element in $\hat{T}$ corresponding to ${T}_{ji}$ be zero.
			
			\UNTIL There are no  $\bar{\mathcal{C}} \subseteq \mathcal{C}$ and 
			$\bar{\mathcal{R}} \subseteq \mathcal{R}$ satisfying \eqref{eq:rankdeg1}.
			
			\RETURN The reduced set $\hat{\mathscr{F}}$ associated with $\hat{T}$.		
		\end{algorithmic}
		\label{alg:entryremove}
	\end{algorithm}
	
	With the element elimination procedure in Algorithm~\ref{alg:entryremove}, $T_{\mathcal{C},\mathcal{R}}$ is sparsified as $\hat{T}$, in which nonconstant elements form a reduced set $\hat{\mathscr{F}}$ after removing the dependent rational functions in $\mathscr{F}$. Thereby, it then yields a new necessary condition for network identifiability as follows.
	\begin{thm}
		\label{thm:necessary}
		Consider the network model set $\mathcal{M}$ with $\mathcal{R}$ and $\mathcal{C}$ the sets of excited and measured vertices, respectively. Let $\hat{\mathscr{F}}$ be the set generated by Algorithm~\ref{alg:entryremove}. $\mathcal{M}$ is identifiable only if 
		$
		|\hat{\mathscr{F}}| \geq |\mathcal{E}|,
		$
		where $\mathcal{E}$ is the edge set of the network.
	\end{thm}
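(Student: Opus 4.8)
The plan is to argue that identifiability forces the $\xi$ functions in $\mathscr{F}$ to be \emph{independent} as rational functions on the variables $\mathscr{X}$, and then to show that Algorithm~\ref{alg:entryremove} only ever discards functions that were already dependent on the surviving ones, so the reduced set $\hat{\mathscr{F}}$ still ``contains all the identifying information'' and must therefore have at least $|\mathcal{E}|$ elements. First I would make precise the intuition from Examples~\ref{ex:4nodes} and~\ref{ex:8nodes}: since the only data available is $T_{\mathcal{C},\mathcal{R}}$, and its constant ($0$ or $1$) entries carry no parameter information, identifiability of $\mathcal{M}$ requires that the map $\theta \mapsto \mathscr{F}(\theta)$, viewed as a map from the $|\mathcal{E}|$ independent module parameters into $\mathbb{R}(q)^{\xi}$, be injective (generically/locally, matching Definition~\ref{defn:netid}). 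A standard fact from the theory of rational/algebraic function fields is that such a map can be injective only if the $\xi$ component functions, together, allow one to recover all $|\mathcal{E}|$ coordinates — equivalently, the transcendence degree of the field generated by $\mathscr{F}$ over $\mathbb{R}(q)$ equals $|\mathcal{E}|$ — which is impossible when $\xi < |\mathcal{E}|$. This already gives the baseline bound $\xi \ge |\mathcal{E}|$ of \eqref{eq:naive}; the work is to strengthen $\xi$ to $|\hat{\mathscr{F}}|$.

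Next I would analyse one iteration of the loop. Suppose \eqref{eq:rankdeg1} holds for some $\bar{\mathcal{C}},\bar{\mathcal{R}}$ in the current matrix $\hat{T}$. By Lemma~\ref{lem:struct} (applied to $\hat{T}$ in place of $T$), $\det(\hat{T}_{\bar{\mathcal{C}},\bar{\mathcal{R}}})=0$ while its cofactor expansion is a genuinely nonzero polynomial combination of the nonconstant entries of $\hat{T}_{\bar{\mathcal{C}},\bar{\mathcal{R}}}$. Pick the removed entry $T_{ji}$ with $i\in\bar{\mathcal{R}}$, $j\in\bar{\mathcal{C}}$; I must check that its coefficient in the cofactor expansion — namely the $(j,i)$ minor of $\hat{T}_{\bar{\mathcal{C}},\bar{\mathcal{R}}}$ — is not identically zero, so that $T_{ji}$ can be \emph{solved for} in terms of the other entries of $\hat{T}_{\bar{\mathcal{C}},\bar{\mathcal{R}}}$ via elementary arithmetic. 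This is where the structural-rank hypothesis is used: because $\sprank(\hat{T}_{\bar{\mathcal{C}},\bar{\mathcal{R}}}) = |\bar{\mathcal{C}}|$, one can choose the pair $(i,j)$ to lie on a full structural-rank pattern of the submatrix obtained by deleting row $j$ and column $i$ is still structurally full rank, so that minor is structurally — hence generically — nonzero. Consequently every function in $\mathscr{F}$ remains expressible by elementary arithmetic in terms of the functions of $\hat{\mathscr{F}}$ after the step: the field generated by the current function set is unchanged by the removal, and in particular its transcendence degree over $\mathbb{R}(q)$ is preserved throughout the algorithm.

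Finally I would close the argument. Since the algorithm preserves the generated field $\mathbb{R}(q)(\mathscr{F}) = \mathbb{R}(q)(\hat{\mathscr{F}})$, identifiability of $\mathcal{M}$ — which, as argued, forces this field to contain all $|\mathcal{E}|$ module variables and hence to have transcendence degree $|\mathcal{E}|$ over $\mathbb{R}(q)$ — implies $|\hat{\mathscr{F}}| \ge \operatorname{trdeg}_{\mathbb{R}(q)} \mathbb{R}(q)(\hat{\mathscr{F}}) = |\mathcal{E}|$, because a set of $m$ elements can generate a field of transcendence degree at most $m$. That is exactly the claimed bound. The main obstacle I anticipate is the middle step: proving cleanly that the specific entry $T_{ji}$ removed in line~4 of Algorithm~\ref{alg:entryremove} is always one that can be \emph{isolated} from the vanishing-determinant relation — i.e.\ that a suitable pair $(i,j)$ on a structural transversal can be selected so the relevant minor is structurally nonzero — and in confirming that the choice is consistent with the algorithm's freedom in picking $(i,j)$. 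A secondary subtlety is handling the ``generic vs.\ all $\theta_0$'' quantifiers so that ``$\operatorname{rank}<\operatorname{sprank}$'' (a property that holds for \emph{all} $\theta$ once it holds generically, since the rank drop here comes from an identically-vanishing determinant) meshes correctly with the identifiability notion in Definition~\ref{defn:netid}.
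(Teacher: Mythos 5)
Your proposal follows essentially the same route as the paper's proof: identifiability forces at least $|\mathcal{E}|$ independent functions among the entries of $T_{\mathcal{C},\mathcal{R}}$ (equations versus unknowns), and Algorithm~\ref{alg:entryremove} discards only entries that are dependent on the survivors, so $|\hat{\mathscr{F}}| \geq |\mathcal{E}|$ is necessary. The paper states both steps informally in a few lines; your version is more careful, phrasing "independence" as transcendence degree over $\mathbb{R}(q)$ and noting that a set of $m$ generators has transcendence degree at most $m$, which is a clean way to make the counting argument rigorous. The obstacle you flag in the middle step is real and is in fact glossed over by the paper's own proof: the algorithm permits an arbitrary choice of $(i,j)$, and the vanishing of $\det(\hat{T}_{\bar{\mathcal{C}},\bar{\mathcal{R}}})$ only lets you solve for $T_{ji}$ if its cofactor is not identically zero. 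Be aware, though, that your proposed fix ("the minor is structurally, hence generically, nonzero") does not quite close this: the entries of $T$ are constrained rational functions of the modules rather than free parameters, so a structurally full-rank minor can still vanish identically — this is precisely the phenomenon exhibited by $T_{\bar{\mathcal{C}},\bar{\mathcal{R}}}$ in Example~\ref{ex:8nodes}. So your write-up is an honest, somewhat more rigorous rendering of the paper's argument, with the same unresolved subtlety correctly identified rather than hidden.
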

	\begin{proof}
		If $\mathcal{M}$ is identifiable, then it is clear that the number of independent functions in $\mathscr{F}$ should be greater than or equal to $|\mathcal{E}|$. Otherwise, there will be less number of equations than the number of unknown modules as indeterminate variables, such that the system of equations becomes underdetermined  and cannot yield a
		unique solution.
		
		From Algorithm~\ref{alg:entryremove}, the returned function set $\hat{\mathscr{F}}$ is generated by removing a subset of elements in $\mathscr{F}$ defined as in \eqref{eq:Fset}, and these removed elements are nonconstant rational functions that are dependent on the rest of  functions in $\hat{\mathscr{F}}$. Then, it is necessary to have $|\hat{\mathscr{F}}| \geq |\mathcal{E}|$ if $\mathcal{M}$ is identifiable.
	\end{proof}
	
	A special case is discussed, where there is a single excited vertex or only one measured vertex. In this case, we do not need to implement the elimination procedure in Algorithm~\ref{alg:entryremove}. 
	\begin{coro} \label{coro:RC1}
		Consider the network model set $\mathcal{M}$ with $\mathcal{R}$ and $\mathcal{C}$ the sets of excited and measured vertices, respectively. Let $|\mathcal{R}| = 1$ or $|\mathcal{C}| = 1$. If $\mathcal{M}$ is identifiable, then
			$|\mathscr{F}| \geq |\mathcal{E}|,$
		where $\mathscr{F}$ is the set of nonconstant elements in $T_{\mathcal{C},\mathcal{R}}$.
	\end{coro}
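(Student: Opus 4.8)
The plan is to show that, under the hypothesis $|\mathcal{R}| = 1$ or $|\mathcal{C}| = 1$, Algorithm~\ref{alg:entryremove} performs no elimination whatsoever, so that the returned set satisfies $\hat{\mathscr{F}} = \mathscr{F}$; the claim then follows at once from Theorem~\ref{thm:necessary}.

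First I would note that the submatrix $\hat{T}_{\bar{\mathcal{C}},\bar{\mathcal{R}}}$ appearing in the loop condition \eqref{eq:rankdeg1} is required to satisfy $|\bar{\mathcal{C}}| = |\bar{\mathcal{R}}|$, hence it must be a square block of $\hat{T}$. Since $\hat{T}$ is initialized as $T_{\mathcal{C},\mathcal{R}}$ and throughout the algorithm only has entries set to zero, it always has exactly $|\mathcal{C}|$ rows and $|\mathcal{R}|$ columns; when $|\mathcal{R}| = 1$ or $|\mathcal{C}| = 1$, the largest square subblock it admits has size $1 \times 1$. So the only candidates for $(\bar{\mathcal{C}},\bar{\mathcal{R}})$ are singletons $\bar{\mathcal{C}} = \{j\}$, $\bar{\mathcal{R}} = \{i\}$.

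Next I would rule these out directly. For a scalar block $\hat{T}_{\{j\},\{i\}} = [\,\hat{T}_{ji}\,]$, one has $\sprank(\hat{T}_{\{j\},\{i\}}) = 1$ precisely when the corresponding structural entry is $*$, i.e.\ when $\hat{T}_{ji}$ is not the zero function, whereas $\rank(\hat{T}_{\{j\},\{i\}}) < 1$ precisely when $\hat{T}_{ji} = 0$. These two conditions are mutually exclusive, so no singleton pair can satisfy \eqref{eq:rankdeg1}. Combined with the previous step, the termination test of Algorithm~\ref{alg:entryremove} already holds at initialization, the body of the \textbf{repeat} loop is never executed, and the algorithm returns $\hat{\mathscr{F}} = \mathscr{F}$, the set of nonconstant elements of $T_{\mathcal{C},\mathcal{R}}$ as in \eqref{eq:Fset}.

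Finally I would invoke Theorem~\ref{thm:necessary}: since $\mathcal{M}$ is identifiable, $|\hat{\mathscr{F}}| \geq |\mathcal{E}|$, and substituting $\hat{\mathscr{F}} = \mathscr{F}$ gives $|\mathscr{F}| \geq |\mathcal{E}|$, as desired. I do not expect a genuine obstacle in this argument; the only point requiring mild care is the bookkeeping around degenerate cases (for instance when $T_{\mathcal{C},\mathcal{R}}$ is itself a rank-deficient vector, e.g.\ all-zero), but these are harmless: rank deficiency of the full vector never manifests as a full-structural-rank square subblock of positive size inside it, so the loop condition still fails and the conclusion is unaffected.
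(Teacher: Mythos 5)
Your argument is correct and follows essentially the same route as the paper: both proofs observe that with a single excited (or measured) vertex the elimination condition \eqref{eq:rankdeg1} can never be triggered, so Algorithm~\ref{alg:entryremove} returns $\hat{\mathscr{F}} = \mathscr{F}$ and Theorem~\ref{thm:necessary} applies directly. Your reduction to $1\times 1$ blocks via the square constraint $|\bar{\mathcal{C}}| = |\bar{\mathcal{R}}|$ is in fact slightly more careful than the paper's one-line rank/structural-rank observation.
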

	\begin{proof}
		If $|\mathcal{R}| = 1$, $T_{\mathcal{C},\mathcal{R}}$ is a matrix with only one column, and thus $	\rank (T_{\bar{\mathcal{C}},{\mathcal{R}}}) =  \sprank (T_{\bar{\mathcal{C}},{\mathcal{R}}})  = 1$, for any $\bar{\mathcal{C}} \subseteq \mathcal{C}$.
		Consequently, no dependent rational functions can be found in $\mathscr{F}$, and the result is immediate from Theorem~\ref{thm:necessary}. For the case that $|\mathcal{C}| = 1$, we can prove the statement in a similar way.
	\end{proof}
	
	\begin{exam}
		\label{ex:6nodes}
		In this example, we illustrate how to use Algorithm~\ref{alg:entryremove} and Theorem~\ref{thm:necessary} to check network identifiability. Consider a circular network in Fig.~\ref{fig:circle1} with six vertices, where $\mathcal{R} = \{1,2,3\}$ and $\mathcal{C} = \{4,5,6\}$. Initially, we have 
		\begin{equation*}
		\hat{T} = T_{\mathcal{C},\mathcal{R}} = \begin{bmatrix}
		T_{41} & T_{42} & T_{43} \\
		T_{51} & T_{52} & T_{53} \\
		T_{61} & T_{62} & T_{63} \\
		\end{bmatrix},
		\end{equation*}
		which gives $\mathscr{F} = \{T_{41}, T_{42}, T_{43}, T_{51}, T_{52}, T_{53}, T_{61}, T_{62}, T_{63}\}$. 
		Note that
		\begin{align*}
		T_{51} & = (1 - \phi_c)^{-1} G_{54} G_{43} G_{32} G_{21}, \\
		T_{41} & = (1 - \phi_c)^{-1} G_{43} G_{32} G_{21}, \\
		T_{52} & = (1 - \phi_c)^{-1} G_{54} G_{43} G_{32}, \\
		T_{42} & = (1 - \phi_c)^{-1} G_{43} G_{32},
		\end{align*}
		where $
		\phi_c : = G_{16} G_{65} G_{54} G_{43} G_{23} G_{21}
		$. The four equations satisfy
		$
		T_{51} T_{41}^{-1} = T_{52} T_{42}^{-1} = G_{54},
		$
		meaning that $T_{\{4,5\}, \{1,2\}}$ is rank deficient while $T_{\{4,5\}, \{1,2\}}$ has full structural rank and thus fulfils \eqref{eq:rankdeg1}. We select $T_{41}$ and remove this element from $\mathscr{F}$ and let 
		\begin{equation}
		\hat{T} = \begin{bmatrix}
		0 &  T_{42} & T_{43} \\
		T_{51} & T_{52} & T_{53} \\
		T_{61} & T_{62} & T_{63} \\
		\end{bmatrix}.
		\end{equation}
		
		Further, we observe that 
		$
		T_{52} T_{42}^{-1} = T_{53} T_{43}^{-1} = G_{54},
		$
		such that $T_{\{4,5\}, \{2,3\}}$ satisfies \eqref{eq:rankdeg1}. We can further eliminate an element $T_{42}$ in $\mathscr{F}$ and let $T_{42} = 0$. We can repeat this process and eventually obtain a reduced set $\hat{\mathscr{F}} = \{T_{43}, T_{53}, T_{61}, T_{62}, T_{63}\}$ associated with the matrix
		\begin{equation*}
		\hat{T} = \begin{bmatrix}
		0 & 0 & T_{43} \\
		0 & 0 & T_{53} \\
		T_{61} & T_{62} & T_{63} \\
		\end{bmatrix}.
		\end{equation*}
		Since there are only $5$ elements, which is less than the number of unknown modules in the original network in Fig.~\ref{fig:circle1}, it follows from Theorem~\ref{thm:necessary} that the model set is not identifiable.
		
		\begin{figure}[!tp] 
			\centering
			\includegraphics[width=0.2\textwidth]{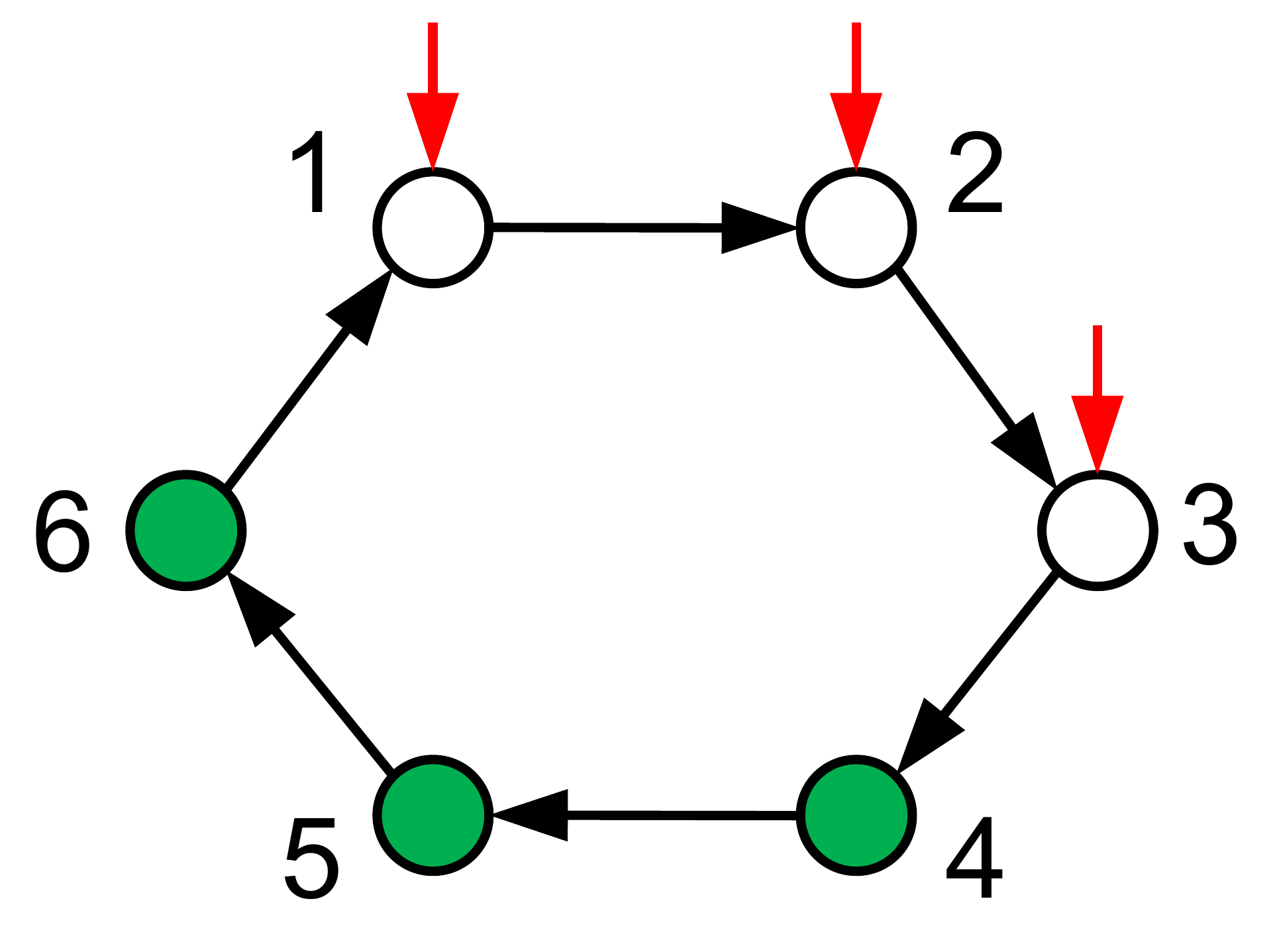}
			\caption{A six-vertex dynamic network with $\mathcal{R} = \{1, 2, 3\}$ and $\mathcal{C}= \{4, 5, 6\}$. The network model set is not identifiable by applying Theorem~\ref{thm:necessary}.}
			\label{fig:circle1}
		\end{figure}
	\end{exam}

	Although Algorithm~\ref{alg:entryremove} and Theorem~\ref{thm:necessary} provide a necessary condition for network identifiability in the partial excitation and partial measurement setting, this  condition is limited since it is difficult to check 
	the relevant ranks in \eqref{eq:rankdeg1} when encountering a large-scale network. Therefore, one of the main contributions of this paper will be to provide a graphical characterisation for dependent functions in $\mathcal{F}$, and a graph-based condition will be given in the next section.

	\subsection{A Graph-Based Condition}

	In this section, we provide a graph-based version for the condition in Theorem~\ref{thm:necessary}. The basic idea is to use graphical alternatives to characterise the rank and the structural rank appearing in \eqref{eq:rankdeg1}.

	First, a graph-based characterisation of a transfer matrix is revisited. Consider the transfer matrix $T_{\mathcal{C},\mathcal{R}}$ in which each nonconstant element is a function of $\theta \in \Theta$. It then follows from e.g., \cite{vanderWoude1991graph} that 
	\begin{equation} \label{eq:vdp_grank}
	\max_{\theta \in \Theta}\rank (T_{\mathcal{C},\mathcal{R}}(q, \theta)) = b_{\mathcal{R}\rightarrow \mathcal{C}},
	\end{equation}
	where $b_{\mathcal{R}\rightarrow \mathcal{C}}$ denotes the maximal number of vertex-disjoint paths from $\mathcal{R}$ to $\mathcal{C}$. The expression on the left-hand side of \eqref{eq:vdp_grank} is referred to as the \textit{generic rank} of the matrix $T_{\mathcal{C},\mathcal{R}}$. From \eqref{eq:vdp_grank}, we further have
	\begin{equation} \label{eq:vdp_rank}
	\rank (T_{\mathcal{C},\mathcal{R}}(q,\theta)) \leq b_{\mathcal{R}\rightarrow \mathcal{C}},
	\end{equation}
	in which the \textit{equality generically holds}, i.e. holds for almost all $\theta \in \Theta$.

	Next, the structural rank of $T_{\mathcal{C},\mathcal{R}}$ is interpreted graphically. The structural matrix of $T_{\mathcal{C},\mathcal{R}}$ that represents its nonzero pattern is associated with a so-called \textit{bipartite graph} defined by a triplet $\mathcal{B} : = (\mathcal{R},\mathcal{C},\mathcal{E}_b)$, where  every edge $(i,j) \in \mathcal{E}_b$ links a vertex $i \in \mathcal{R}$ and $j \in \mathcal{C}$, if $T_{ji} \neq 0$ (see e.g., \cite{godsil2013algebraic}).  
	 Note that we allow a vertex $i$ to be  excited and measured simultaneously, thus an edge $(i,i) \in \mathcal{E}_b$ may exist. 
	\begin{defn}
			In a bipartite graph $ \mathcal{B}= (\mathcal{R},\mathcal{C},\mathcal{E}_b)$, a \textbf{matching} between two sets $\bar{\mathcal{R}} \subseteq \mathcal{R}$ and $\bar{\mathcal{C}} \subseteq \mathcal{C}$ is a set of pairwise edges between $\bar{\mathcal{R}}$ and $\bar{\mathcal{C}}$ that do not share any common vertices. Furthermore, a \textbf{maximum(-cardinality) matching} between $\bar{\mathcal{R}}$ and $\bar{\mathcal{C}}$, denoted by $\mathscr{M}(\bar{\mathcal{R}}, \bar{\mathcal{C}})$, is a matching between $\bar{\mathcal{R}}$ and $\bar{\mathcal{C}}$ with the largest possible number of edges.
	\end{defn}
 	This concept then leads to the following result. 
	\begin{lem} \label{lem:maxmatch}  \cite{godsil2013algebraic}
		Consider the matrix $T$ in \eqref{eq:T}, and any $\bar{\mathcal{R}} \subseteq \mathcal{R}$, $\bar{\mathcal{C}} \subseteq \mathcal{C}$. It holds that
		$
		\sprank (T_{\bar{\mathcal{C}},\bar{\mathcal{R}}})
		=
		|\mathscr{M}(\bar{\mathcal{R}}, \bar{\mathcal{C}})|,
		$
		where $\mathscr{M}(\bar{\mathcal{R}}, \bar{\mathcal{C}})$ is any maximum matching between $\bar{\mathcal{R}}$, $\bar{\mathcal{C}}$ in the associated bipartite graph.
	\end{lem}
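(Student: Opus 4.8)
The plan is to establish the classical Frobenius--K\"onig identity between the structural (term) rank of a matrix and the size of a maximum matching of its bipartite graph, specialised to the submatrix $T_{\bar{\mathcal{C}},\bar{\mathcal{R}}}$. Write $\rho$ for the \emph{term rank} of $T_{\bar{\mathcal{C}},\bar{\mathcal{R}}}$, i.e.\ the largest number of structurally nonzero entries of $T_{\bar{\mathcal{C}},\bar{\mathcal{R}}}$ lying in pairwise distinct rows and pairwise distinct columns. The first step is to unwind the graph--matrix dictionary: the rows of $T_{\bar{\mathcal{C}},\bar{\mathcal{R}}}$ are indexed by $\bar{\mathcal{C}}$ and its columns by $\bar{\mathcal{R}}$, and by the definition of $\mathcal{B}$ an entry in row $j\in\bar{\mathcal{C}}$ and column $i\in\bar{\mathcal{R}}$ is nonzero precisely when $(i,j)\in\mathcal{E}_b$. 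Hence a collection of nonzero entries occupying distinct rows and distinct columns is exactly a set of edges of $\mathcal{B}$ that pairwise share no endpoint in $\bar{\mathcal{R}}$ or $\bar{\mathcal{C}}$, that is, a matching between $\bar{\mathcal{R}}$ and $\bar{\mathcal{C}}$; taking the largest one gives $\rho = |\mathscr{M}(\bar{\mathcal{R}},\bar{\mathcal{C}})|$. It therefore suffices to prove $\sprank(T_{\bar{\mathcal{C}},\bar{\mathcal{R}}}) = \rho$.

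For the inequality $\sprank(T_{\bar{\mathcal{C}},\bar{\mathcal{R}}}) \ge \rho$, fix a transversal of $\rho$ nonzero positions realising the term rank, assign the value $1$ to each of them and the value $0$ to every other $*$-entry. The resulting real matrix has the same zero pattern as $T_{\bar{\mathcal{C}},\bar{\mathcal{R}}}$, and its restriction to the $\rho$ rows and $\rho$ columns of the transversal is a permutation matrix, hence nonsingular; so this matrix has rank at least $\rho$, and since $\sprank$ is by definition the maximum rank over all matrices with this pattern, $\sprank(T_{\bar{\mathcal{C}},\bar{\mathcal{R}}}) \ge \rho$.

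For the reverse inequality, let $A$ be any real matrix with the zero pattern of $T_{\bar{\mathcal{C}},\bar{\mathcal{R}}}$ and let $A'$ be any $k\times k$ submatrix of $A$ with $k > \rho$. By maximality of $\rho$ there is no set of $k$ nonzero entries of $A'$ in distinct rows and columns, so for every permutation $\sigma$ the product $\prod_{\ell} A'_{\ell,\sigma(\ell)}$ contains at least one structurally-zero factor and vanishes; by the Leibniz formula $\det A' = 0$. As every minor of order larger than $\rho$ vanishes, $\rank A \le \rho$, and maximising over $A$ yields $\sprank(T_{\bar{\mathcal{C}},\bar{\mathcal{R}}}) \le \rho$. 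Combining the two bounds proves the claim. Edges of the form $(i,i)$, which may occur when a vertex is both excited and measured, cause no difficulty here: the two copies of $i$ in $\mathcal{R}$ and $\mathcal{C}$ index a row and a column that are treated as distinct throughout.

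The only genuinely non-routine step is the upper bound $\sprank \le \rho$: it relies on the combinatorial (Leibniz) expansion of the determinant to see that a minor whose support admits no full transversal must be \emph{identically} zero as a polynomial in the matrix entries, hence zero for every real realisation of the pattern. The graph--matrix correspondence and the lower bound are straightforward bookkeeping. An alternative to writing this out is to invoke the Frobenius--K\"onig theorem directly, as in \cite{godsil2013algebraic}.
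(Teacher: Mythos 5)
The paper offers no proof of this lemma at all---it is imported verbatim from \cite{godsil2013algebraic} as the classical ``structural rank $=$ term rank $=$ maximum matching'' identity---so there is nothing to compare against except the citation. Your write-up is a correct and complete proof of that classical fact: the dictionary between transversals of structurally nonzero entries and matchings of $\mathcal{B}$ is stated accurately (including the remark that the two copies of a vertex $i\in\mathcal{R}\cap\mathcal{C}$ are distinct vertices of the bipartite graph, so $(i,i)$ is an ordinary edge), and the upper bound via the Leibniz expansion---every permutation product in a minor of order exceeding the term rank must hit a structurally zero position, hence the minor vanishes identically---is exactly the non-trivial half and is argued correctly. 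The one technical wrinkle is in your lower bound: you set every non-transversal $*$-entry to $0$, but the paper's Definition~2 requires the competitor matrix to have \emph{the same} nonzero pattern as $S$, so those entries must remain nonzero. The fix is standard and one line: put $\varepsilon\neq 0$ in the non-transversal $*$-positions and note that the determinant of the $\rho\times\rho$ transversal submatrix is a polynomial in $\varepsilon$ equal to $\pm 1$ at $\varepsilon=0$, hence nonzero for all sufficiently small $\varepsilon$; alternatively observe that a generic choice of values in the $*$-positions attains the term rank. With that repair the argument is airtight and, if anything, more informative than the paper's bare citation.
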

	The following example is used to demonstrate how to determine the structural rank of a matrix through the maximum matching in its associated bipartite graph. 
	\begin{exam}
		\label{ex:4nodes_bp}
		A four-vertex dynamic network is shown in Fig.~\ref{fig:fournodes2}, in which $\mathcal{R} = \{1, 2, 4\}$, and $\mathcal{C} = \{2, 3, 4\}$. From Definition~\ref{defn:strucmat}, this network has the structural matrix as
		\begin{equation*}
		S = \begin{bmatrix}
		* & 0  & 0 & 0\\
		* & *  & 0 & 0\\
		* & * & * & *\\
		* & * & * & *\\
		\end{bmatrix}, 
		\ \text{and} \
		S_{\mathcal{C},\mathcal{R}} = \begin{bmatrix}
		* & * & 0\\
		* & * & *\\
		* & * & *\\
		\end{bmatrix},
		\end{equation*} 
		The associated bipartite graph $\mathcal{B}$ of $T_{\mathcal{C},\mathcal{R}}$ is constructed as in Fig.~\ref{fig:bipartite}. 
		A maximum matching of this bipartite graph is given as $\{(1,2), (2,3), (4,4)\}$, which has cardinality 3. Thus, $\sprank(T_{\mathcal{C},\mathcal{R}}) = 3$.
		Note that the maximum matchings of a bipartite graph may not be unique. An alternative in this case can be $\{(1,2), (2,4), (4,3)\}$.
		\begin{figure}[!tp] 
			\begin{minipage}[t]{0.5\linewidth}
				\centering
				\includegraphics[width=0.8\textwidth]{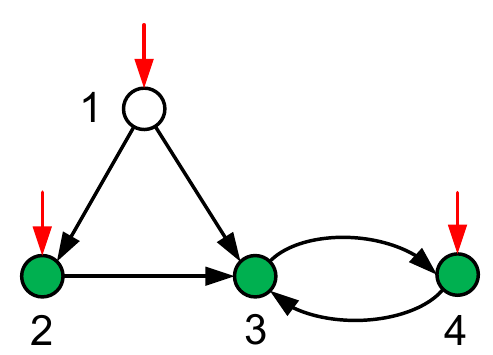}
				\subcaption{}
				\label{fig:fournodes2}
			\end{minipage}%
			\begin{minipage}[t]{0.5\linewidth}
				\centering
				\includegraphics[width=0.7\textwidth]{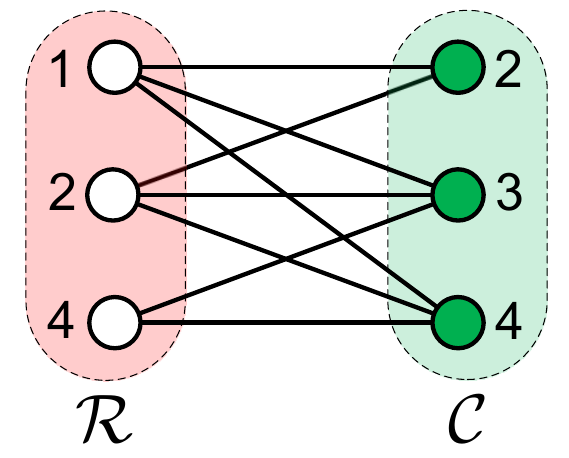}
				\subcaption{}
				\label{fig:bipartite}
			\end{minipage}
			\caption{(a) A four-vertex dynamic network with $\mathcal{R} = \{1, 2, 4\}$ and $\mathcal{C}= \{2, 3, 4\}$. (b) The associated bipartite graph $\mathcal{B}$ with a maximum matching $\{(1,2), (2,3), (4,4)\}$.}
		\end{figure}
	\end{exam}
	
	By means of Lemma~\ref{lem:maxmatch} and the relation \eqref{eq:vdp_rank}, the result in Lemma~\ref{lem:struct} can be reformulated on the basis of graphs.
	\begin{coro}
		\label{coro:struct}
		Consider the network model in \eqref{eq:net} with the underlying graph $\mathcal{G}$. Let $\mathcal{R}$ and $\mathcal{C}$ be the sets of vertices that are excited and measured, respectively. The nonconstant rational functions in the set $\mathscr{F}$ defined in \eqref{eq:Fset} are dependent, if there exist two subsets
		$\bar{\mathcal{C}} \subseteq \mathcal{C}$ and 
		$\bar{\mathcal{R}} \subseteq \mathcal{R}$ such that
		\begin{equation} \label{eq:struct1}
		b_{\bar{\mathcal{R}} \rightarrow \bar{\mathcal{C}}}  < |\mathscr{M} (\bar{\mathcal{R}}, \bar{\mathcal{C}})| = |\bar{\mathcal{C}}| = |\bar{\mathcal{R}}|,
		\end{equation}
		where $\mathscr{M}(\bar{\mathcal{R}},\bar{\mathcal{C}})$ is a maximum matching between $\bar{\mathcal{R}}$ and $\bar{\mathcal{C}}$ in the bipartite graph $\mathcal{B} := \{\mathcal{R},\mathcal{C},\mathcal{E}_b\}$, and $b_{\bar{\mathcal{R}} \rightarrow \bar{\mathcal{C}}}$ is the maximum number of vertex-disjoint paths from $\bar{\mathcal{R}}$ to $\bar{\mathcal{C}}$ in $\mathcal{G}$. 
	\end{coro}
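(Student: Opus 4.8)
The plan is to translate the two conditions in Lemma~\ref{lem:struct} one by one into their graphical counterparts, using Lemma~\ref{lem:maxmatch} to replace the structural rank and the relation \eqref{eq:vdp_grank}--\eqref{eq:vdp_rank} to control the rank, and then to invoke Lemma~\ref{lem:struct} verbatim. So the whole argument should amount to a substitution of equivalent quantities into \eqref{eq:struct}.

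First I would treat the equality chain. For the chosen subsets $\bar{\mathcal{R}} \subseteq \mathcal{R}$ and $\bar{\mathcal{C}} \subseteq \mathcal{C}$, Lemma~\ref{lem:maxmatch} applied to the submatrix $T_{\bar{\mathcal{C}},\bar{\mathcal{R}}}$ gives $\sprank(T_{\bar{\mathcal{C}},\bar{\mathcal{R}}}) = |\mathscr{M}(\bar{\mathcal{R}},\bar{\mathcal{C}})|$, where the maximum matching is taken in the bipartite graph $\mathcal{B}$ restricted to the vertex sets $\bar{\mathcal{R}}$ and $\bar{\mathcal{C}}$, which exactly carries the nonzero pattern of $T_{\bar{\mathcal{C}},\bar{\mathcal{R}}}$. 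Hence the hypothesis $|\mathscr{M}(\bar{\mathcal{R}},\bar{\mathcal{C}})| = |\bar{\mathcal{C}}| = |\bar{\mathcal{R}}|$ in \eqref{eq:struct1} is precisely the statement $\sprank(T_{\bar{\mathcal{C}},\bar{\mathcal{R}}}) = |\bar{\mathcal{C}}| = |\bar{\mathcal{R}}|$, i.e.\ $T_{\bar{\mathcal{C}},\bar{\mathcal{R}}}$ is square with full structural rank.

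Next I would treat the strict inequality. Since $T_{\bar{\mathcal{C}},\bar{\mathcal{R}}}$ is again a submatrix of $T = (I-G)^{-1}$ of exactly the same type --- rows indexed by the (restricted) measured set $\bar{\mathcal{C}}$, columns by the (restricted) excited set $\bar{\mathcal{R}}$ --- the relation \eqref{eq:vdp_grank}--\eqref{eq:vdp_rank} applies with $\mathcal{R},\mathcal{C}$ replaced by $\bar{\mathcal{R}},\bar{\mathcal{C}}$, yielding $\rank(T_{\bar{\mathcal{C}},\bar{\mathcal{R}}}(q,\theta)) \le b_{\bar{\mathcal{R}}\rightarrow\bar{\mathcal{C}}}$ with equality for almost all $\theta \in \Theta$; in particular the normal (generic) rank of $T_{\bar{\mathcal{C}},\bar{\mathcal{R}}}$ equals $b_{\bar{\mathcal{R}}\rightarrow\bar{\mathcal{C}}}$. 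Combining this with the hypothesis $b_{\bar{\mathcal{R}}\rightarrow\bar{\mathcal{C}}} < |\mathscr{M}(\bar{\mathcal{R}},\bar{\mathcal{C}})| = |\bar{\mathcal{C}}|$ gives $\rank(T_{\bar{\mathcal{C}},\bar{\mathcal{R}}}) < |\bar{\mathcal{C}}| = \sprank(T_{\bar{\mathcal{C}},\bar{\mathcal{R}}})$. Thus \eqref{eq:struct} holds for the pair $\bar{\mathcal{R}},\bar{\mathcal{C}}$, and Lemma~\ref{lem:struct} immediately delivers that the nonconstant rational functions in $\mathscr{F}$ are dependent.

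The step needing the most care is reconciling the two notions of rank: \eqref{eq:vdp_rank} is phrased for the numerical rank at a value of $\theta$ (with equality only generic), whereas Lemma~\ref{lem:struct} uses the rank of $T_{\bar{\mathcal{C}},\bar{\mathcal{R}}}$ viewed as a matrix of rational functions in the indeterminate modules. I would handle this by noting that the normal/generic rank is by definition the value attained for almost all parameters, so a strict-for-almost-all upper bound is a strict bound on the normal rank, which is exactly what forces $\det(T_{\bar{\mathcal{C}},\bar{\mathcal{R}}}) \equiv 0$ as a rational function and hence the claimed dependency. The only other point to verify is that \eqref{eq:vdp_grank}, stated for $T_{\mathcal{C},\mathcal{R}}$, legitimately specializes to the submatrix $T_{\bar{\mathcal{C}},\bar{\mathcal{R}}}$ with the disjoint paths counted between $\bar{\mathcal{R}}$ and $\bar{\mathcal{C}}$ in the full graph $\mathcal{G}$; this is immediate because $T_{\bar{\mathcal{C}},\bar{\mathcal{R}}}$ is itself the measured-to-excited submatrix of $T$ associated with the excited/measured sets $\bar{\mathcal{R}},\bar{\mathcal{C}}$. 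Apart from these bookkeeping remarks the proof is a direct substitution and I do not anticipate a genuine obstacle.
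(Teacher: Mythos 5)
Your proof is correct and matches the paper's intended argument: the paper states Corollary~\ref{coro:struct} with only the one-line remark that it follows from Lemma~\ref{lem:maxmatch} (structural rank $=$ maximum matching) and \eqref{eq:vdp_rank} (rank bounded by vertex-disjoint paths) substituted into Lemma~\ref{lem:struct}, which is exactly the substitution you carry out. Your extra care in reconciling the generic/normal rank with the rank notion used in Lemma~\ref{lem:struct} is a sound clarification of a point the paper leaves implicit, not a deviation.
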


	We show how to apply Corollary~\ref{coro:struct} to check if the nonconstant elements in $\mathscr{F}$ are dependent in Example~\ref{ex:4nodes_bp}. Consider two subsets $\bar{\mathcal{R}} = \{1,2\}$ and $\bar{\mathcal{C}} = \{3,4\}$. Observe that $\{(1,3), (2,4)\}$ is a maximum matching between the two subsets with $|\mathscr{M} (\bar{\mathcal{R}}, \bar{\mathcal{C}})|  = 2$, while the maximum number of vertex disjoint paths from $\bar{\mathcal{R}}$ to $\bar{\mathcal{C}}$ is only 1. Therefore, the elements in $\mathscr{F}$ are not independent.
	
	Now we derive a necessary condition for network identifiability based on a more comprehensive procedure that iteratively removes edges in the bipartite graph $\mathcal{B}$ of $T_{\mathcal{C},\mathcal{R}}$.
	Note that each edge in  $\mathcal{B}$ is associated with an nonconstant entry in $T_{\mathcal{C},\mathcal{R}}$, or an element in $\mathscr{F}$. This simplification process corresponds to the element removal steps in Algorithm~\ref{alg:entryremove}.
	 Consider a network model as in \eqref{eq:net} with $\mathcal{R}$ and $\mathcal{C}$ the excited and measured vertices, where $|\mathcal{R}| \geq 2$ and $|\mathcal{C}| \geq 2$. In the case where $|\mathcal{R}| = 1$ or $|\mathcal{C}| = 1$, we can simply apply Corollary~\ref{coro:RC1}.
	Let $\mathcal{B} : = (\mathcal{R},\mathcal{C},\mathcal{E}_b)$ be the bipartite graph associated with $T_{\mathcal{C},\mathcal{R}}$. A graph simplification process is performed on $\mathcal{B}$, see Algorithm~\ref{alg:edgeremove}, in which the set $\mathcal{E}_b (\bar{\mathcal{R}}, \bar{\mathcal{C}}) \subseteq \mathcal{E}_b ({\mathcal{R}}, {\mathcal{C}})$ collects all the edges between 
	$\bar{\mathcal{R}} \subseteq \mathcal{R}$ and $\bar{\mathcal{C}} \subseteq \mathcal{C}$.

	\begin{algorithm}
		\caption{Edge-Removal in Bipartite Graph}
		\begin{algorithmic}[1]
			\STATE  \textbf{initialize} $\hat{\mathcal{E}}_t = \emptyset$
			
			\FOR{$k = 2 : \min\{|\mathcal{R}|, |\mathcal{C}|\}$}
	
			\STATE For all $\bar{\mathcal{C}} \subseteq \mathcal{C}$ and 
			$\bar{\mathcal{R}} \subseteq \mathcal{R}$ with $|\bar{\mathcal{C}}| = 
			|\bar{\mathcal{R}}| = k$,
	
			\IF{$b_{\bar{\mathcal{R}} \rightarrow \bar{\mathcal{C}}} < |\mathscr{M} (\bar{\mathcal{R}}, \bar{\mathcal{C}})| = k$ and $\mathcal{E}_b (\bar{\mathcal{R}}, \bar{\mathcal{C}}) \nsubseteq \hat{\mathcal{E}}_t $}
			\STATE $\hat{\mathcal{E}}_t \leftarrow \mathcal{E}_b (\bar{\mathcal{R}}, \bar{\mathcal{C}}) \cup \hat{\mathcal{E}}_t$
			\STATE Remove an arbitrary edge $(i,j)$ in the bipartite graph with $i \in \bar{\mathcal{R}}$, $j \in \bar{\mathcal{C}}$.
			\ENDIF
			
			\ENDFOR 
			
			\RETURN A simplified bipartite graph  $\hat{\mathcal{B}}$.	
		\end{algorithmic}
		\label{alg:edgeremove}
	\end{algorithm}
	
	The path-based  characterisation \eqref{eq:vdp_rank} for the rank of the transfer matrix $T_{\mathcal{C},\mathcal{R}}$ does not hold for a matrix $\hat{T}$ in Algorithm~\ref{alg:entryremove} that is a sparsification of $T_{\mathcal{C},\mathcal{R}}$ with certain entries in $T_{\mathcal{C},\mathcal{R}}$ assigned to zero. Therefore, in Algorithm~\ref{alg:edgeremove}, we identify the independent functions in $\mathscr{F}$ based on the original $T_{\mathcal{C},\mathcal{R}}$ matrix in order to utilize the path-based characterisation \eqref{eq:vdp_rank}. To this end, we define an edge set $\hat{\mathcal{E}}_t$, which collects all the edges corresponding to the rational functions whose dependency have been exploited. If the subsets $\bar{\mathcal{C}} \subseteq \mathcal{C}$ and 
	$\bar{\mathcal{R}} \subseteq \mathcal{R}$ have been found to satisfy $b_{\bar{\mathcal{R}} \rightarrow \bar{\mathcal{C}}} < |\mathscr{M}(\bar{\mathcal{R}}, \bar{\mathcal{C}})|$, {meaning that the matrix $T_{\bar{\mathcal{C}},\bar{\mathcal{R}}}$ contains dependent elements}, we then include all the edges between $\bar{\mathcal{R}}$ and $\bar{\mathcal{C}}$ in $\mathcal{B}$ into the set $\hat{\mathcal{E}}_t$. For the latter iteration, it is required that $\mathcal{E}_b (\bar{\mathcal{R}}, \bar{\mathcal{C}}) \nsubseteq \hat{\mathcal{E}}_t $, namely, we avoid checking subsets $\bar{\mathcal{C}}$ and 
	$\bar{\mathcal{R}}$ if all the edges between the two subsets are in $\hat{\mathcal{E}}_t$, as the dependency of the relevant functions associated with $\hat{\mathcal{E}}_t$ has been examined. We will illustrate the procedure of Algorithm~\ref{alg:edgeremove} in Example~\ref{ex:bpremove}.

	Algorithm~\ref{alg:edgeremove} starts with inspecting $2 \times 2$ submatrices $T_{\bar{\mathcal{C}},\bar{\mathcal{R}}}$ in $T_{\mathcal{C},\mathcal{R}}$ 
	which are the smallest submatrices for detecting dependent elements in $\mathscr{F}$.
	Note that we do not start with the higher-dimensional submatrices in $T_{\mathcal{C},\mathcal{R}}$, since it may not be able to find dependent elements in some lower-dimensional submatrices. Take the network in Fig.~\ref{fig:circle1} as an example, if we first check the submatrix $T_{\bar{\mathcal{C}},\bar{\mathcal{R}}}$ with $\bar{\mathcal{R}} = \{1,2,3\}$ and $\bar{\mathcal{R}} = \{4,5,6\}$, then all the edges of the associated bipartite graph will be included in $\hat{\mathcal{E}}_t$. As a result, Algorithm~\ref{alg:edgeremove} will stop checking dependent elements in $2 \times 2$ submatrices, which leads to a more conservative result.
	
	With the simplified bipartite graph $\hat{\mathcal{B}}$ generated by Algorithm~\ref{alg:edgeremove}, we obtain a graph-based necessary condition for network identifiability as follows.
	\begin{coro}
		\label{coro:necessary_graph}
		Consider the network model set $\mathcal{M}$ with $\mathcal{R}$ and $\mathcal{C}$ the sets of excited and measured vertices, respectively. Let $\hat{\mathcal{E}_b}$ be the edge set of the simplified bipartite graph $\hat{\mathcal{B}}$ generated by Algorithm~\ref{alg:edgeremove}. The model set $\mathcal{M}$ is identifiable only if $|\hat{\mathcal{E}_b}| \geq |\mathcal{E}|$.
	\end{coro}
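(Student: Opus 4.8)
The plan is to reuse the counting argument behind Theorem~\ref{thm:necessary}, but carried out on the bipartite graph rather than on the matrix $\hat T$. As in that proof, if $\mathcal{M}$ is identifiable then the number of functionally independent functions in $\mathscr{F}$ --- i.e.\ the largest number of algebraically independent equations relating the unknown modules to the available entries of $T_{\mathcal{C},\mathcal{R}}$ --- must be at least $|\mathcal{E}|$, for otherwise the equations in the modules are underdetermined and admit no unique solution. Hence it suffices to show that $|\hat{\mathcal{E}_b}|$, the number of edges surviving Algorithm~\ref{alg:edgeremove}, is at least that independent count; the corollary is then immediate. (If $|\mathcal{R}|=1$ or $|\mathcal{C}|=1$ the for-loop of Algorithm~\ref{alg:edgeremove} is empty, so $\hat{\mathcal{B}}=\mathcal{B}$ and the statement coincides with Corollary~\ref{coro:RC1}; thus one may assume $|\mathcal{R}|\geq 2$ and $|\mathcal{C}|\geq 2$.)

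The core step is to verify that each edge deleted by Algorithm~\ref{alg:edgeremove} carries a rational function that is dependent on the functions indexed by the surviving edges, so that the deletion leaves the independent count unchanged. Suppose the algorithm acts on subsets $\bar{\mathcal{R}}\subseteq\mathcal{R}$, $\bar{\mathcal{C}}\subseteq\mathcal{C}$ with $b_{\bar{\mathcal{R}} \rightarrow \bar{\mathcal{C}}}<|\mathscr{M}(\bar{\mathcal{R}},\bar{\mathcal{C}})|=|\bar{\mathcal{R}}|=|\bar{\mathcal{C}}|$. By Lemma~\ref{lem:maxmatch} and \eqref{eq:vdp_rank} this is exactly condition \eqref{eq:rankdeg1} for $T_{\bar{\mathcal{C}},\bar{\mathcal{R}}}$, so (Corollary~\ref{coro:struct}) $\det(T_{\bar{\mathcal{C}},\bar{\mathcal{R}}})=0$ is a nontrivial polynomial relation among the entries of $T_{\bar{\mathcal{C}},\bar{\mathcal{R}}}$, all of which lie in $\mathscr{F}$. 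I would remove an edge $(i,j)$ of this block whose entry $T_{ji}$ has a cofactor $\pm\det(T_{\bar{\mathcal{C}}\setminus\{j\},\bar{\mathcal{R}}\setminus\{i\}})$ that does not vanish identically and that lies outside the bookkeeping set $\hat{\mathcal{E}}_t$; then $\det(T_{\bar{\mathcal{C}},\bar{\mathcal{R}}})=0$ can be solved for $T_{ji}$ in terms of the other entries of the block, hence in terms of functions whose dependence has not yet been exploited. Because Algorithm~\ref{alg:edgeremove} scans the block sizes $k=2,3,\dots$ in increasing order and only acts on blocks not already contained in $\hat{\mathcal{E}}_t$, such an edge should always be available; proceeding by induction over the iterations, the functions indexed by $\hat{\mathcal{E}_b}$ retain the same number of independent members as $\mathscr{F}$, whence $|\hat{\mathcal{E}_b}|$ is at least the number of independent functions in $\mathscr{F}$, which by the argument of Theorem~\ref{thm:necessary} is at least $|\mathcal{E}|$.

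The hard part is this inductive bookkeeping, not the rank translation. One must exclude a spurious deletion in which a dependency already exploited through a smaller submatrix is rediscovered inside a larger one and triggers removal of an edge whose function is in fact independent of the survivors; the set $\hat{\mathcal{E}}_t$ together with the increasing order over $k$ is precisely what is meant to rule this out. Turning this into a proof requires (i) showing that whenever the algorithm acts on a block there is a removable edge with non-vanishing cofactor that lies outside $\hat{\mathcal{E}}_t$ --- which relies on the fact that a rank-deficient submatrix of full structural rank and minimal size contains a proper submatrix of full generic rank --- and (ii) proving that the transcendence degree of the surviving function set is an invariant of the iterations, using the exchange property of algebraic dependence. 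A comparatively routine point is the passage between the generic quantity $b_{\bar{\mathcal{R}} \rightarrow \bar{\mathcal{C}}}$ in \eqref{eq:struct1} and the normal rank $\rank(T_{\bar{\mathcal{C}},\bar{\mathcal{R}}})$ of Lemma~\ref{lem:struct}, which is covered by \eqref{eq:vdp_rank} since the condition is generic in $\theta$.
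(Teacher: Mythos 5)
Your proposal follows the same route the paper intends: Corollary~\ref{coro:necessary_graph} is stated there without a separate proof, as an immediate combination of the counting argument of Theorem~\ref{thm:necessary} with the graph-theoretic surrogates for rank and structural rank (Lemma~\ref{lem:maxmatch}, relation \eqref{eq:vdp_rank}, Corollary~\ref{coro:struct}) that turn Algorithm~\ref{alg:entryremove} into Algorithm~\ref{alg:edgeremove}. The inductive bookkeeping you single out as the hard part --- verifying that no removal discards a function that is in fact independent of the survivors when blocks overlap, which is what the set $\hat{\mathcal{E}}_t$ and the increasing order over $k$ are meant to guarantee --- is indeed the delicate point, and the paper does not address it either, so your sketch of items (i) and (ii) goes beyond what the paper itself supplies.
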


	\begin{exam}
		\label{ex:bpremove}
		We implement Corollary~\ref{coro:necessary_graph} to check identifiability of the model set of the network in Fig.~\ref{fig:circle1}. The associated bipartite graph $\mathcal{B}$ is shown in Fig.~\ref{fig:circlebip}. Consider the subsets $\bar{\mathcal{R}} = \{1, 2\}$, $\bar{\mathcal{C}} = \{4, 5\}$ with $|\mathscr{M} (\bar{\mathcal{R}}, \bar{\mathcal{C}})| = 2$. We find that the maximal number of disjoint paths from $\bar{\mathcal{R}}$ to $\bar{\mathcal{C}}$ in Fig.~\ref{fig:circle1} is less than $2$, implying that the four entries in $T_{\bar{\mathcal{C}},\bar{\mathcal{R}}}$ are dependent. Therefore, we remove an edge $(1,4)$ in $\mathcal{B}$ and meanwhile obtain the edge set $\hat{\mathcal{E}}_t = \{(1,4), (2,4), (1,5), (2,5)\}$. 
		
		We proceed to  check the subsets $\bar{\mathcal{R}} = \{1, 2\}$, $\bar{\mathcal{C}} = \{5, 6\}$, which allows to remove an edge $(1, 5)$ in $\mathcal{B}$, and the edge set $\hat{\mathcal{E}}_t$ is enlarged as $\hat{\mathcal{E}}_t = \{(1,4), (2,4), (1,5), (2,5), (1,6), (2, 6)\}$. This process can be continued by considering $\bar{\mathcal{R}} = \{2, 3\}$, $\bar{\mathcal{C}} = \{4, 5\}$ and then $\bar{\mathcal{R}} = \{2, 3\}$, $\bar{\mathcal{C}} = \{5, 6\}$. Accordingly, we remove two more edges $(2,4)$ and $(2,5)$ in $\mathcal{B}$, yielding a reduced bipartite graph with five edges, see the edges in Fig.~\ref{fig:circlebip} indicated by solid lines. Note that although there are more pairs satisfying \eqref{eq:struct1}, e.g., $\bar{\mathcal{R}} = \{1, 3\}$, $\bar{\mathcal{C}} = \{4, 6\}$ and $\bar{\mathcal{R}} = \{1, 2, 3\}$, $\bar{\mathcal{C}} = \{4, 5, 6\}$, we no longer impose an edge removal step, since $\hat{\mathcal{E}}_t$ now has covered all the edges in $\mathcal{B}$. Eventually, it follows from Corollary~\ref{coro:necessary_graph} that the model set of the network in Fig.~\ref{fig:circle1} is not identifiable as
		$|\hat{\mathcal{E}_b}| = 5 < |\mathcal{E}| = 6$.
		\begin{figure}[!tp] 
			\centering
			\includegraphics[width=0.18\textwidth]{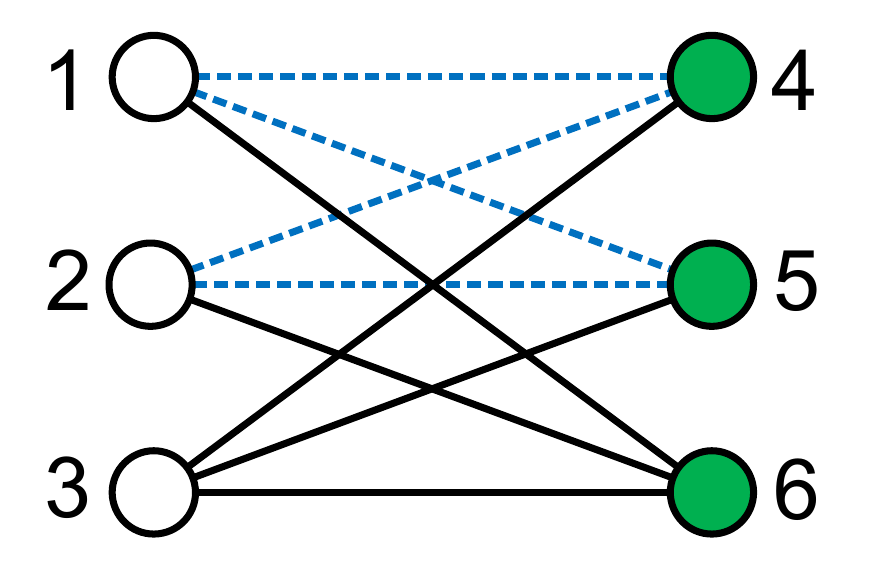}
			\caption{The bipartite graph associated with the network in Fig.~\ref{fig:circle1}. The dashed lines represent the edges can be removed.}
			\label{fig:circlebip}
		\end{figure}
	\end{exam}

	\subsection{Identifiability of Circular Networks}
	
	In this section, we zoom in network identifiability of directed circular  networks, 	
	which also includes isolated cycles in a larger network, namely, the ones that do not share any common vertices with other cycles. The result here appears as the very first necessary and sufficient condition for identifiability of cyclic graphs. In the existing work \cite{Bazanella2019CDC}, only sufficient conditions for identifiability of circular networks are presented:
	\begin{enumerate}
		\item A directed circular network is identifiable if $\mathcal{R} \cup \mathcal{C} = \mathcal{V}$ and $\mathcal{R} \cap \mathcal{C} \ne \emptyset$.
		\item In the special case that the vertex number $L$ is even and larger than 3, a  circular network is identifiable if its nodes are alternately measured and excited.
	\end{enumerate}
	
	However, the above two conditions can be conservative in some circumstances, see Example~\ref{ex:circle}, in which the network does not satisfy the two conditions. However this network is actually identifiable.  
    We will show this by exploring a new necessary and sufficient condition for identifiability in circular networks.
	\begin{thm} \label{thm:circle}
		A directed circular network is identifiable if and only if $\mathcal{R} \cup \mathcal{C} = \mathcal{V}$ and one of the following conditions holds.
		\begin{enumerate}
			\item $|\mathcal{R}| = 1$, $\mathcal{R} \subset \mathcal{C}$;
			
			\item $|\mathcal{C}| = 1$, $\mathcal{C} \subset \mathcal{R}$;
			
			\item  $|\mathcal{R}| \geq 2$, $|\mathcal{C}| \geq 2$, and 
			there are at least two vertex disjoint paths from $\mathcal{R}$ to $\mathcal{C}$ in the cycle.
		\end{enumerate}
	\end{thm}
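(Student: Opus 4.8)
The plan is to treat the "only if" and "if" directions separately, relying on the necessary conditions already established (Lemma~\ref{lem:exormeas}, Proposition~\ref{pro:necessary1}, Corollary~\ref{coro:RC1}, Corollary~\ref{coro:necessary_graph}) for necessity, and a direct constructive argument for sufficiency. Throughout I would label the vertices of the cycle $1 \to 2 \to \cdots \to L \to 1$ so that $\mathcal{E} = \{(i, i+1) : i \in \mathcal{V}\}$ (indices mod $L$), and exploit the very rigid structure of $T = (I-G)^{-1}$ in this case: every nonzero entry $T_{\ell k}$ is of the form $(1-\phi_c)^{-1}$ times a product of consecutive modules along the arc from $k$ to $\ell$, where $\phi_c = \prod_{i} G_{i,i+1}$ is the loop gain.

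For necessity, the condition $\mathcal{R} \cup \mathcal{C} = \mathcal{V}$ is immediate from Lemma~\ref{lem:exormeas}. For the remaining part I would argue by cases on $|\mathcal{R}|, |\mathcal{C}|$. If $|\mathcal{R}|=1$ with $\mathcal{R} \not\subset \mathcal{C}$, then combined with $\mathcal{R}\cup\mathcal{C}=\mathcal{V}$ we would need $\mathcal{C}=\mathcal{V}\setminus\mathcal{R}$, i.e. $|\mathcal{C}| = L-1$; here $T_{\mathcal{C},\mathcal{R}}$ is a single column with $L-1$ nonconstant entries, which is fewer than $|\mathcal{E}| = L$, contradicting Corollary~\ref{coro:RC1} — so $\mathcal{R}\subset\mathcal{C}$ is forced, giving case~1; case~2 is dual. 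When $|\mathcal{R}|\ge 2$ and $|\mathcal{C}|\ge 2$, suppose for contradiction that there are not two vertex-disjoint paths from $\mathcal{R}$ to $\mathcal{C}$ in the cycle. The key combinatorial observation is that in a single directed cycle, any two directed paths from $\mathcal{R}$-vertices to $\mathcal{C}$-vertices either are vertex-disjoint or one contains the start (or end) of the other; I would use this to show that $b_{\mathcal{R}\to\mathcal{C}} \le 1$ forces every $2\times 2$ submatrix $T_{\bar{\mathcal{C}},\bar{\mathcal{R}}}$ of full structural rank to satisfy \eqref{eq:struct1}, and more strongly, that the iterative removal in Algorithm~\ref{alg:edgeremove} collapses the bipartite graph down to strictly fewer than $L$ edges — the redundancy is exactly that any two nonzero entries sharing a "tail" or "head" segment of the cycle have a common-ratio relation like $T_{51}T_{41}^{-1} = T_{52}T_{42}^{-1}$ seen in Example~\ref{ex:6nodes}. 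Then Corollary~\ref{coro:necessary_graph} gives non-identifiability.

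For sufficiency, I would show in each of the three cases that all $L$ modules $G_{i,i+1}$ can be uniquely recovered from $T_{\mathcal{C},\mathcal{R}}$. In case~1 ($\mathcal{R}=\{k\}\subseteq\mathcal{C}$), the column $T_{\cdot,k}$ contains $T_{kk} = (1-\phi_c)^{-1}$ and, for each $\ell\neq k$, $T_{\ell k} = (1-\phi_c)^{-1}\prod_{j \in \text{arc}(k\to\ell)} G_{j,j+1}$; since $\mathcal{C} \supseteq \mathcal{V}\setminus\{k\}$ actually $\mathcal{C}=\mathcal{V}$, consecutive ratios $T_{\ell+1,k} / T_{\ell k} = G_{\ell,\ell+1}$ recover every module directly, and $\phi_c$ (hence the last module closing the loop) is then consistent — so identifiability holds. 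Case~2 is dual, reading ratios along a row. Case~3 is the substantive one: pick two vertex-disjoint paths $P_1: r_1 \rightsquigarrow c_1$ and $P_2: r_2 \rightsquigarrow c_2$ with $r_1,r_2\in\mathcal{R}$, $c_1,c_2\in\mathcal{C}$; their disjointness means they partition the cycle into two complementary arcs, and $\mathcal{R}\cup\mathcal{C}=\mathcal{V}$ forces every vertex on each arc to be excited or measured. Along the arc containing $P_1$, I would recover the modules by a chain of ratios of entries $T_{\ell k}$ anchored between the nearest excited vertex upstream and nearest measured vertex downstream of each edge (this is where $\mathcal{R}\cup\mathcal{C}=\mathcal{V}$ is used: such anchors always exist and are "close enough" that the ratio isolates a single module or a short product that telescopes); the same along the other arc; and finally the loop gain $\phi_c$, read off from any diagonal or long entry, closes the system. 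The main obstacle will be case~3: making the anchoring/telescoping argument precise and uniform — showing that the two vertex-disjoint paths, together with $\mathcal{R}\cup\mathcal{C}=\mathcal{V}$, always supply enough "short" nonconstant entries of $T_{\mathcal{C},\mathcal{R}}$ whose ratios isolate the individual modules, with no circular dependency — and verifying that the recovered modules are consistent with the single scalar constraint imposed by $\phi_c$. I expect this to reduce to a clean statement about intervals on a cycle covered by the union $\mathcal{R}\cup\mathcal{C}$, which I would isolate as a small combinatorial sublemma before assembling the final argument.
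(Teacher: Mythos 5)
Your proposal follows essentially the same route as the paper: necessity via $\mathcal{R}\cup\mathcal{C}=\mathcal{V}$, the count of nonconstant entries in the single-column/row cases, and the rank-deficiency of every full-structural-rank $2\times 2$ submatrix of $T_{\mathcal{C},\mathcal{R}}$ when $b_{\mathcal{R}\to\mathcal{C}}\le 1$ (collapsing the bipartite graph to $L-1$ edges); sufficiency via the cross-ratio $T_{j1}T_{k1}^{-1}T_{ki}T_{ji}^{-1}=\phi_c$ built from the two vertex-disjoint paths, followed by edge-by-edge recovery of each $G_{u,u+1}$ using local ratios and the fact that every vertex is excited or measured. The only cosmetic difference is that you rederive the sufficiency of cases 1 and 2 by explicit consecutive ratios, where the paper simply cites the earlier circular-network results.
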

	\begin{proof}
		Consider a circular network with $L$ vertices, which also has $L$ unknown modules.
		
		\textit{Necessity}: It has been shown in \cite{Bazanella2019CDC} that $\mathcal{R} \cap \mathcal{C} = \mathcal{V}$ is a necessary condition for identifiability of any directed network. In the following, we prove the three conditions by contradiction.
		In the case that $|\mathcal{R}| = 1$, i.e., there is only a single excited vertex, we assume this vertex to be unmeasured. It means $|\mathcal{C}| = L - 1$, and there are only $L-1$ nonconstant entries in $T_{\mathcal{C},\mathcal{R}}$, from which, it is impossible to recover $L$ unknown modules. This leads to a contradiction to identifiability. The proof for the case that $|\mathcal{C}| = 1$ follows similarly and thus is omitted here. 
		
		Next, we show that the existence of two vertex disjoint paths from $\mathcal{R}$ to $\mathcal{C}$ is necessary for network identifiability of a directed cycle when $|\mathcal{R}| = K \geq 2$, $|\mathcal{C}| = N \geq 2$. Let $\mathcal{R} = \{j_1, ... ,j_K \}$ and $\mathcal{C} = \{i_1, ... ,i_N \}$. In this case,  $\hat{T} = T_{\mathcal{C},\mathcal{R}}$ is an $N \times K$ transfer matrix, where each entry is nonconstant (see Example~\ref{ex:6nodes}). Suppose that there does not exist two or more vertex disjoint paths in the cycle. Then, $\mathcal{C} \cap \mathcal{R} = \emptyset$, i.e. $N + K = L$. Furthermore, each $2 \times 2$ submatrix of  $\hat{T}$, e.g.,
		\begin{equation*}
		T_{\{i_1,i_2\}, \{j_1,j_2\}} = \begin{bmatrix}
		T_{i_1, j_1} & T_{i_1,j_2} \\
		T_{i_2, j_1} & T_{i_2, j_2}
		\end{bmatrix},
		\end{equation*}
		is not full rank. Following the element removal process in Algorithm~\ref{alg:entryremove}, we can assign $T_{i_1, j_1} = 0$ in $T$. Repeating the operation for each $2 \times 2$ submatrix of $T$, it leads to 
		\begin{equation*}
		\hat{T} = \begin{bmatrix}
		0 & \cdots & 0 & T_{i_1, j_K} \\
		\vdots & \ddots & \vdots & \vdots \\
		0 & \cdots & 0 & T_{i_{N-1}, j_K} \\
		T_{i_N, j_1} & \cdots & T_{i_{N}, j_{K-1}} & T_{i_N, j_K}
		\end{bmatrix},
		\end{equation*}
		whose bipartite graph contains only $N + K -1$ edges. It follows from Theorem~\ref{thm:necessary} that the circular network is not identifiable. Therefore, condition 3) is necessary.
		
		\textit{Sufficiency}: It directly obtain from \cite{Bazanella2019CDC} that the conditions 1) and 2) are sufficient for network identifiability of the circular network. Now we focus on the case that there are more than one excited and measured vertices. Without loss of generality, let vertices $1$ and $i$ be excited and vertices $j$, $k$ be measured, with $1 \leq k < i \leq j \leq L$. Moreover, the two paths from vertices $j_1$ to $i_1$ and from vertices $j_2$ to $i_2$ are vertex disjoint. Denote $
		\phi_c = G_{12} G_{23} \cdots G_{L-1,{L}} G_{L1}
		$ as the transfer function for the cycle that starts and ends at vertex $1$. Note that 
		\begin{align*}
		T_{k1} &= (1 - \phi_c)^{-1} G_{12}  \cdots G_{k-1,k}, \\
		T_{j1} &= (1 - \phi_c)^{-1} G_{12}  \cdots G_{k-1,k} G_{k,k+1} \cdots G_{j-1,j}, \\
		T_{ki} &= (1 - \phi_c)^{-1} G_{i,i+1}  \cdots G_{j-1,j} G_{j,j+1}   \cdots G_{k-1,k}, \\
		T_{ji} &= (1 - \phi_c)^{-1} G_{i,i+1}  \cdots G_{j-1,j}.
		\end{align*}  
		Therefore, we obtain
		\begin{equation} \label{eq:phic}
		T_{j1} T_{k1}^{-1} T_{ki} T_{ji}^{-1} = G_{k,k+1} \cdots G_{j-1,j} G_{j,j+1}   \cdots G_{k-1,k},
		\end{equation}
		which is equivalent to the cycle transfer function $\phi_c$. 
		
		For any vertex $u$ on the cycle, identifiability of $G_{u,u+1}$ can be analyzed as follows. If both vertices $u$  and $u+1$ are measured, then we can find an excited vertex $s$ such that 
		$G_{u,u+1} = T_{u+1,s} T_{u,s}$; If vertex $u$ is excited while $u+1$ is measured, then we have $G_{u,u+1} = (1 - \phi_c) T_{u,u+1}$, where $\phi_c$ is identified from \eqref{eq:phic}. If $u, u+1 \in \mathcal{R}$ or $u \in \mathcal{C}, u+1 \in \mathcal{R}$, we can prove identifiability of $G_{u,u+1}$ in a similar way. As the vertex $u$ can be chosen arbitrarily in the cycle, we thus can identify all the modules on the cycle.
	\end{proof}
	
	The first two conditions in Theorem~\ref{thm:circle} can be interpreted as that a vertex has to be measured (excited) if it is the only excited (measured) vertex in the cycle. 
	The minimal number of signals, i.e., $\gamma: = |\mathcal{R}| + |\mathcal{C}|$, required for identifiability of a circular network is now discussed. When the number of vertices $L \leq 3$ in the cycle, we have $\gamma = L + 1$, since at least one vertex has to be excited and measured simultaneously. When $L > 3$, then identifiability is guaranteed if $\gamma = L$ with two vertex disjoint paths from the excited vertices to the measured ones. Note that it is not necessary to impose the vertices to be alternately measured and excited as in \cite{Bazanella2019CDC}. In the following example, identifiability of the circular network model set cannot be determined by the conditions in \cite{Bazanella2019CDC}, while it can be checked by using Theorem~\ref{thm:circle}. 
	\begin{exam}
		\label{ex:circle}
		Consider a circular network in Fig.~\ref{fig:circle} with six vertices with $\mathcal{R} = \{1,3,4\}$ and $\mathcal{C} = \{2,5,6\}$. Observe that this network does not satisfy the identifiability conditions proposed in \cite{Bazanella2019CDC}. However,  there are two vertex disjoint paths from $\mathcal{R}$ to $\mathcal{C}$, highlighted by the blue edges. 
		Therefore, the third condition in Theorem~\ref{thm:circle} is fulfilled, showing identifiability of the model set of this circular network. 
		
		\begin{figure}[!tp]
			\centering
			\includegraphics[width=0.2\textwidth]{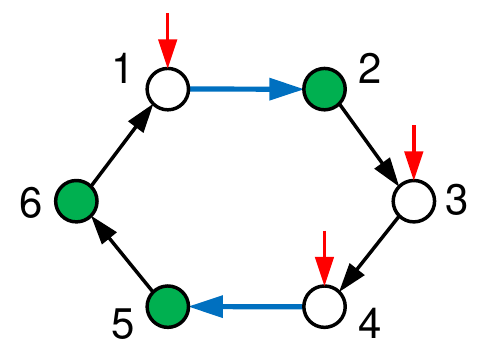}
			\caption{A circular  network with two vertex disjoint paths indicated by the blue edges. The model set is identifiable.}
			\label{fig:circle}
		\end{figure}%
	\end{exam}

	\section{Conclusion}
	\label{sec:conclusion}
	
	In this paper, we have analysed identifiability of a dynamic network where only partial excitation and measurement signals are available.  We presented a necessary condition for identifiability of general networks, where identifiability is determined by the dependency of a set of rational functions with the parametrised modules as indeterminate variables. The merit of this result is that the necessary condition can be reinterpreted as a graph-theoretical condition only dependent on network topology. Moreover, we obtain a necessary and sufficient identifiability condition for circular  networks.

	\bibliographystyle{IEEEtran}        
	\bibliography{netid}

\end{document}